\definecolor{Darkblue}{rgb}{0,0,0.4}
\definecolor{Brown}{cmyk}{0,0.81,1.,0.60}
\definecolor{Purple}{cmyk}{0.45,0.86,0,0}
\newcommand{\mydriver}{hypertex}
 \renewcommand{\mydriver}{pdftex}
\newcommand{\lref}[2][]{\hyperref[#2]{#1~\ref*{#2}}}
\newtheorem{theorem}{Theorem}[section]
\newtheorem{definition}[theorem]{Definition}
\newtheorem{lemma}[theorem]{Lemma}
\newtheorem{corollary}[theorem]{Corollary}
\newenvironment{proof}{

\noindent{\bf Proof:}}
{\hfill$\blacksquare$

}
\newcommand{\junk}[1]{}
\newcommand{\ignore}[1]{}
\newcommand{\R}[0]{{\ensuremath{\mathbb{R}}}}
\newcommand{\Z}[0]{{\ensuremath{\mathbb{Z}}}}
\def\ceil#1{\left\lceil #1 \right\rceil}
\newcommand{\pr}[1]{{\rm Pr} \left[ #1 \right]}
\newcommand{\E}{\mathbb{E}}
\newcommand{\Eb}[1]{\E\left[#1\right]} 
\newcommand{\var}[1]{\mathop{Var}\left[#1\right]}
\newcounter{note}[section]
\newcommand{\qedsymb}{\hfill{\rule{2mm}{2mm}}}
\renewenvironment{proof}{\begin{trivlist} \item[\hspace{\labelsep}{\bf
\noindent Proof.\/}] }{\qedsymb\end{trivlist}}%
\newcommand{\initOneLiners}{%
    \setlength{\itemsep}{0pt}
    \setlength{\parsep }{0pt}
    \setlength{\topsep }{0pt}
}
\newcommand{\ve}[1]{{\cal V}\left(#1\right)} 
\begin{document}

\title{Bernstein-like Concentration and Moment Inequalities for Polynomials of Independent Random Variables:  Multilinear Case}

\author{Warren Schudy\thanks{IBM T. J. Watson Research Center, P.O.
Box 218, Yorktown Heights, NY
     10598. {\tt wjschudy@us.ibm.com}} \and Maxim Sviridenko\thanks{University of Warwick,  {\tt sviri@dcs.warwick.ac.uk}}}
\date{}

\maketitle

\begin{abstract}

Polynomials of independent random variables arise in a variety of fields such as Machine Learning, Analysis of Boolean Functions, Additive Combinatorics, Random Graphs Theory, Stochastic Partial  Differential Equations etc. They naturally model the expected value of objective function (or lefthand side of constraints) for randomized rounding algorithms for non-linear optimization problems where one finds a solution of an "easy" continuous problem and rounds it to a solution of a "hard" integral problem (one such example is Convex Integer Programming \cite{BCL}).  To measure the performance guarantee of such algorithms one needs analogously to the analysis employed by Raghavan and Thompson \cite{RT}  for boolean integer programming problems  an analog of Chernoff Bounds for polynomials of independent random variables. There are many known forms and variations of Chernoff Bounds. One of the tightest ones is based on a variance of a sum of random variables known as Bernstein inequality. Another popular albeit a weaker version is using an estimate of a variance through the expectation. The later versions of concentration inequalities for polynomials of independent random variables are known \cite{KV,SS}. In this paper we derive an analog of Bernstein Inequality for multilinear polynomials of independent random variables.

We show that the probability that a multilinear polynomial $f$ of independent random variables exceeds its mean by $\lambda$ is at most $e^{-\lambda^2 / (R^q Var(f))}$ for sufficiently small $\lambda$, where $R$ is an absolute constant.  This matches (up to constants in the exponent) what one would expect from the central limit theorem. Our methods handle a variety of types of random variables including Gaussian, Boolean, exponential, and Poisson. Previous work by Kim-Vu and Schudy-Sviridenko gave bounds of the same form that involved less natural parameters in place of the variance.
\end{abstract}

\section{Introduction}

Polynomials of independent random variables arise in a variety of fields such as Machine Learning, Analysis of Boolean Functions, Additive Combinatorics, Random Graphs Theory, Stochastic Partial  Differential Equations etc. They naturally model the expected value of objective function (or lefthand side of constraints) for randomized rounding algorithms for non-linear optimization problems where one finds a solution of an "easy" continuous problem and rounds it to a solution of a "hard" integral problem (one such example is Convex Integer Programming \cite{BCL}).  To measure the performance guarantee of such algorithms one needs analogously to the analysis employed by Raghavan and Thompson \cite{RT}  for boolean integer programming problems  an analog of Chernoff Bounds for polynomials of independent random variables. There are many known forms and variations of Chernoff Bounds. One of the tightest ones is based on a variance of a sum of random variables known as Bernstein Inequality \cite{B,B1,U}. Another popular albeit a weaker version is using an estimate of a variance through the expectation. The later versions of concentration inequalities for polynomials of independent random variables are known \cite{KV,SS}. In this paper we derive an analog of Bernstein Inequality for multilinear polynomials of independent random variables.

Perhaps the most celebrated theorem in statistics is the central limit theorem. This theorem (actually family of theorems) states conditions under which a sum of $n$ independent random variables converges to being normally (i.e.\ Gaussian) distributed as $n \to \infty$. Let $Y_1,Y_2,\dots$ be a sequence of independent random variables. Let $Var[Z]=\Eb{(Z - \Eb{Z})^2}=\Eb{Z^2}-\Eb{Z}^2$ be the variance of the random variable $Z$. Various central limit theorems state various conditions on the $Y_i$ under which
\begin{align}
\lim_{n \to \infty} \pr{\sum_{i=1}^n (Y_i - \Eb{Y_i}) \ge \lambda \sqrt{\var{\sum_{i=1}^n Y_i}}} &= \int_{-\infty}^\lambda \frac{1}{\sqrt{2\pi}} e^{-x^2/2} dx \label{eqn:clt}
\end{align}
for any $\lambda \in \R$.
One sufficient condition is that the $Y_i$ are identically distributed with finite variance \cite{jacod}. Another set of sufficient conditions is that there exists $M, \epsilon > 0$ such that all $\Eb{|Y_i|^{2+\epsilon}} \le M$ and $\lim_{n \to \infty}Var[\sum_{i=1}^n Y_i]=\infty$ \cite{jacod}.

The rate of convergence of the limit is often of interest. The Berry-Esseen theorem \cite{jacod} states that when the $Y_i$ are identically distributed with finite $\Eb{|Y_1|^3}$ and $\Eb{Y_1^2}=\sigma^2$ then
\begin{align}
\left|\pr{\sum_{i=1}^n (Y_i - \Eb{Y_i}) \ge \lambda \sqrt{\var{\sum_{i=1}^n Y_i}}} - \int_{-\infty}^\lambda \frac{1}{\sqrt{2\pi}} e^{-x^2/2} dx\right| \le \frac{0.77\Eb{|Y_1|^3}}{\sigma^3 \sqrt{n}}
.\label{eqn:berryEsseen}
\end{align}

Many applications require upper bounds on the probability of large deviations for finite $n$. The Berry-Esseen bound (\ref{eqn:berryEsseen}) is exponentially far from tight for many such applications, for example the probability that at least three-quarters of a sequence of coin flips are heads is $2^{-\Theta(n)}$ but the Berry-Esseen bound is $O(1/\sqrt{n})$. Fortunately it is possible to do much better in many cases. For example if the $Y_i$ are independent random variables with $0 \le Y_i \le 1$ then a standard Bernstein inequality (e.g. Theorem 2.3 (b) in \cite{mcdiarmid98}) states that
\begin{align} 
\pr{\sum_{i=1}^n (Y_i - \Eb{Y_i}) \ge \lambda} &\le \exp\left(-\frac{\lambda^2}{2\mu + 2\lambda/3}\right) \label{eq:chernoff}
\end{align}
for any $\lambda>0$ where $\mu=\Eb{\sum_{i=1}^n Y_i}$. Note that the small-$\lambda$ probability bound is roughly $\exp\left(-\frac{\lambda^2}{2\mu}\right)$, which matches the Gaussian behavior suggested by the central limit theorem except for the use of the upper-bound for the variance $\mu$ in place of the variance. This discrepancy can be remedied, yielding another variant of the Bernstein inequality (see Theorem 2.7 in \cite{mcdiarmid98})
\begin{align} 
\pr{\sum_{i=1}^n (Y_i - \Eb{Y_i}) \ge \lambda} &\le \exp\left(-\frac{\lambda^2}{2V + 2\lambda/3}\right) \label{eq:chernoffVar}
\end{align}
where $V=Var(\sum_{i=1}^n Y_i)$.
For $\lambda \le V$ this matches (up to constants in the exponent) what the central limit theorem suggests.

Kim and Vu introduced variants of Chernoff bound (\ref{eq:chernoff}) for polynomials of independent Boolean random variables \cite{KV}.
Vu \cite{V1} tightened and generalized the bounds to handle independent random variables with arbitrary distributions in the interval $[0,1]$.
Schudy and Sviridenko \cite{SS} proved a  stronger  concentration inequality for polynomials of independent random variables  satisfying a general condition (see Definition \ref{var2}). Note that \cite{V1} contains one extension not handled in \cite{SS} and this paper, namely using less then $q$ (the degree of the polynomial) smoothness parameters.
 These bounds share the Gaussian-like behavior for small $\lambda$ with (\ref{eq:chernoff}), but they use an upper bound on the variance that is more complicated than the $\mu$ used in (\ref{eq:chernoff}). The behavior for large $\lambda$ is also different. Our main contribution is an analog of (\ref{eq:chernoffVar}) for polynomial $f(Y)$ of power $q$:
\begin{align}
\pr{|f(Y) - \Eb{f(Y)}| \ge \lambda } &\le e^2 \exp \left(-\frac{\lambda^2 }{Var[f(Y)]R^q}\right) \label{eq:us}
\end{align}
for all sufficiently small $\lambda$ (see Theorem \ref{main} for the precise statement), where $R$ is an absolute constant. What values of $\lambda$ are ``sufficiently small'' depends on parameters $\mu_1,\mu_2,\dots,\mu_q$ defined in the next section. For example in the setting of (\ref{eq:chernoffVar}) we reproduce that bound up to constants in the exponent: Gaussian-like tails for $\lambda \le V$ and exponential tails for larger $\lambda$. Some polynomials require $\lambda$ to be so small that $e^2 e^{-\lambda^2 / R^q}$ always exceeds 1 and hence (\ref{eq:us}) is vacuous. We expect that most applications will involve $\lambda$ sufficiently small for (\ref{eq:us}) to apply.

The improvement of (\ref{eq:us}) compared to the concentration inequalities in Schudy-Sviridenko \cite{SS} is analogous to the improvement of (\ref{eq:chernoffVar}) compared to (\ref{eq:chernoff}).
There are countless applications of Bernstein Inequality (or its variants known as Chernoff or Hoeffding Bounds) and its martingale versions \cite{B1,FR}. Recent algorithmic applications of the martingale version of Bernstein Inequality that require dependence on variance instead of expectation are \cite{DHK} and \cite{MSS}. Analogously, we expect that in the future there will be many applications (e.g. counting in random graphs) where one would necessarily need a stronger inequality of Theorem \ref{main} (analog of (\ref{eq:chernoffVar}) for polynomials)  instead of Theorem \ref{main1specialSS} (analog of (\ref{eq:chernoff}) for polynomials). Note that before our work such statements were not even known for boolean random variables.

\subsection{Our Results}
We are given a hypergraph $H=({\cal V}(H),{\cal H}(H))$ consisting of a set ${\cal V}(H)=\{1,2,\ldots,n\}=[n]$ of vertices and a set ${\cal H}(H)$ of hyperedges. A hyperedge $h$ consists of a set ${\cal V}(h) \subseteq {\cal V}(H)$ of $|{\cal V}(h)| \le q$ vertices. We are also given a  weight $w_h$ for each $h \in {\cal H}(H)$. For each such weighted hypergraph and real-valued weight $w_h$ for its hyperedges, we define a multilinear polynomial
\begin{align}
f(x)&= \sum_{h\in {\cal  H}(H) } w_h\prod_{v\in {\cal V}(h)}x_v. \label{eqn:fmulti}
\end{align}
We call the maximum hyperedge cardinality $q$ the \emph{power} of the polynomial $f$.

We use essentially the same smoothness parameters as Kim and Vu \cite{KV,V1} in our previous work \cite{SS}.
For a given collection of independent random variables $Y=(Y_1,\dots,Y_n)$, hypergraph $H$, weights   $ w$ and integer parameter $r\ge 0$, we define
$$\mu_r=\mu_r(Y,H,w)= \max_{S\subseteq [n]:|S|=r}\left(\sum_{h\in {\cal  H}| {\cal V}(h) \supseteq S} |w_h| \prod_{v\in {\cal V}(h)\setminus S}\Eb{|Y_v|}\right).$$
 Note that $S$ need not be avertex set of some hyperedge of $H$ and may even be the empty set.
 Sometimes we will also use the notation $\mu_r(f)=\mu_r(Y,H,w)$ to emphasize the dependence on polynomial $f$.

In the previous work \cite{SS}, we proved moment and concentration  inequalities that could be viewed as an extension of (\ref{eq:chernoff}) to polynomials of random variables satisfying the following condition.
\begin{definition}\label{var2} \cite{SS}
   A random variable $Z$ is called \emph{\bf moment bounded} with real parameter $L>0$, if for any integer $i\ge 1$ we have
 $$ \Eb{\left| Z\right|^i} \le   i\cdot L\cdot\Eb{ \left|Z\right|^{i-1}}.$$
\end{definition}
That work \cite{SS} showed that three large classes of random variables are moment bounded: bounded, continuous log-concave \cite{BB,An} and discrete log-concave \cite{An}.
The results of the current paper apply to a related type of random variable.
\begin{definition}\label{var}
 A random variable $Z$ is called \emph{\bf central moment bounded} with real parameter $L>0$, if for any integer $i\ge 1$ we have
 $$ \Eb{\left| Z-\Eb{ Z} \right|^i} \le   i\cdot L\cdot\Eb{ \left| Z -\Eb{ Z } \right|^{i-1}}.$$
\end{definition}
In Section \ref{sec:exampleMom} we show that the three classes of random variables that are known to be moment bounded (i.e.\ bounded, continuous log-concave and discrete log-concave) are also central moment bounded. For example Poisson, geometric, normal (i.e.\ Gaussian), and exponential distributions are all central moment bounded.

We prove the following:
\begin{theorem}\label{main}
We are given $n$ independent central moment bounded random variables $Y=(Y_1,\dots, Y_n)$ with the same parameter $L$. We are given a multilinear polynomial $f(y)$  of  power $q$.  Let $f(Y)=f(Y_1,\dots,Y_n)$ then
\begin{equation}\label{main:eq}
Pr\left[|f(Y)-\E[f(Y)]|\ge \lambda\right]\le e^2\cdot \max\left\{ e^{-\frac{\lambda^2}{Var[f(Y)]\cdot R^q }},\max_{r\in [q]}e^{-\left(\frac{\lambda}{\mu_r L^r R^q}\right)^{1/r}}\right\},
\end{equation}
where $R$ is some absolute constant.
\end{theorem}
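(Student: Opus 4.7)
The plan is to establish a moment inequality
\[
\Eb{|f(Y)-\Eb{f(Y)}|^t}\;\le\;\max\!\Bigl((A\,t\,Var[f(Y)]\,R^q)^{t/2},\;\max_{r\in[q]}(A\,t^r\,\mu_r L^r R^q)^{t}\Bigr)
\]
for every integer $t\ge 2$ (with $A$ an absolute constant), and then to convert this into the claimed tail bound via Markov's inequality $\pr{|g(Y)|\ge \lambda}\le \lambda^{-t}\Eb{|g(Y)|^t}$ applied to $g(Y):=f(Y)-\Eb{f(Y)}$. Optimizing $t$ separately for each branch of the max produces the two types of tails: $t\approx\lambda^2/(e^2 R^q Var[f(Y)])$ gives the Gaussian factor $\exp(-\lambda^2/(R^q Var[f(Y)]))$, while $t\approx (\lambda/(\mu_r L^r R^q))^{1/r}/e^2$ gives each exponential factor $\exp(-(\lambda/(\mu_r L^r R^q))^{1/r})$. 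Rounding to the nearest integer $t$, taking the worst of the two optimizations, and inflating $R$ to absorb the constant $A$ and the rounding losses produces the final $e^2$ prefactor.

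The heart of the argument is the moment bound itself. I would first reduce to the centered variables $Z_i:=Y_i-\Eb{Y_i}$, which remain central moment bounded with the same parameter $L$ (the defining inequality in Definition~\ref{var} is invariant under translation). Iterating that inequality yields the clean estimate $\Eb{|Z_i|^k}\le k!\,L^k$ for every $k\ge 0$. Expanding $f(Y)-\Eb{f(Y)}$ in the centered variables gives a multilinear polynomial $\tilde f(Z)=\sum_{\emptyset\neq S}c_S\prod_{i\in S}Z_i$ with $c_S=\sum_{h\supseteq S} w_h\prod_{j\in h\setminus S}\Eb{Y_j}$, and by independence of the $Z_i$ we have $Var[f(Y)]=\sum_{\emptyset\neq S} c_S^2\prod_{i\in S}Var[Z_i]$. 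I then prove the moment bound by induction on the degree $q$, peeling off one variable at a time: write $\tilde f(Z)=g_0(Z_{[n-1]})+Z_n\,g_1(Z_{[n-1]})$, condition on $Z_{[n-1]}$, expand $\tilde f(Z)^t$ binomially in $Z_n$, bound $\Eb{|Z_n|^k}\le k!L^k$ term by term, and combine with H\"older's inequality and the inductive hypothesis applied to $g_0$ (degree $\le q$ in the remaining variables) and $g_1$ (degree $\le q-1$). The base case $q=1$ is the classical Rosenthal inequality for sums of independent centered random variables, which already has exactly the required max-form with one variance term and one $\mu_1 L$ term.

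The principal obstacle will be the bookkeeping that tracks how $Var[f(Y)]$ and the parameters $\mu_r$ transform under the peeling step. The $\mu_r$ behave monotonically when a variable is fixed or removed, which is what drives the proofs in \cite{SS}; the variance, by contrast, is a more global object that does not split cleanly between $g_0$ and $Z_n g_1$, and a single variable $Z_n$ that carries a large fraction of $Var[f(Y)]$ is a genuinely difficult case in which the induction has to fall back on the exponential ($\mu_r$-based) branch of the maximum. The key technical work is to show that each peeling step costs only a constant multiplicative factor in both branches simultaneously, so that after $q$ peels one loses only $R^q$ as advertised, and to handle carefully the cross moments $\Eb{|g_0|^{t-k}|g_1|^k}$ that appear after the binomial expansion, splitting them via H\"older in a way that keeps the variance term and the $\mu_r$ terms matched against the correct factors of $t$.
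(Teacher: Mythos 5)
Your overall strategy of proving a $k$-th moment bound and converting it to a tail bound via Markov's inequality is the same as the paper's (Sections 5 and 6), but your route to the moment bound has two genuine gaps.

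First, the estimate $\Eb{|Z_n|^k}\le k!\,L^k$ is too lossy. Iterating central moment boundedness all the way to $\Eb{|Z_n|^0}=1$ throws away the distribution-dependent factors $\Eb{Z_n^2}$ and $\Eb{|Y_n|}$, which are precisely what assemble into $Var[f(Y)]$ and $\mu_r$. The paper instead stops the iteration one or two steps short: $\Eb{|X_u|^{d_u}}\le d_u!\,L^{d_u-1}\Eb{|X_u|}$ feeds $\Eb{|Y_u|}$ into $\mu_r$, and $\Eb{|X_u|^{d_u}}\le \tfrac{d_u!}{2}L^{d_u-2}\Eb{X_u^2}$ feeds $\Eb{X_u^2}$ into $Var$ (see the displays leading to (\ref{aux2}) and (\ref{aux3})). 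With your crude bound, the resulting moment estimate is controlled by quantities like $\sum_h|w_h|$ rather than by $Var[f(Y)]$ and $\mu_r$; for a linear polynomial with $n$ equal weights and variables concentrated near $0$ this overshoots the claimed bound by a factor growing with $n$, so the inequality you would obtain is strictly weaker than (\ref{main:eq}) and cannot imply it. Relatedly, the cancellation you need when applying the inductive hypothesis to $g_1$ (whose variance enters $Var[\tilde f]$ only through the product $\Eb{Z_n^2}\,\Eb{g_1^2}$) is exactly the $\Eb{Z_n^2}$ factor that the crude bound discards.

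Second, the claim that ``after $q$ peels one loses only $R^q$'' is incorrect: the recursion you describe peels off one \emph{variable}, and $g_0$ keeps degree $q$ in $n-1$ variables. The induction therefore proceeds in $n$ (or worse, jointly in $n$ and $q$), so a constant loss per step accumulates to $R^n$, not $R^q$, unless the $g_0$-branch is shown to incur zero loss, which is itself a substantial claim you would need to establish. The paper avoids this entirely by expanding $\E[g(X)^k]$ directly as a sum over labeled hypergraphs, bounding each term via the Ordering Lemma (Lemma \ref{lem:ordering}), which produces two special hyperedges per connected component so that the elementary inequality $ab\le(a^2+b^2)/2$ turns the $\mu_0\mu_q$ contribution of \cite{SS} into a variance contribution (via Lemma \ref{variance}), and then invoking the Main Counting Lemma \ref{MainCount} to sum over hypergraphs. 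Your peeling/H\"older route is a different mechanism, and as described it does not yet close.

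Finally, a smaller point: for $q=1$ the classical Rosenthal inequality gives $\sum_i\Eb{|X_i|^p}$ in the non-Gaussian branch, which is not the $\mu_1$ parameter; even the base case of your induction would need the finer single-variable moment estimate to pass from $\sum_i|w_i|^p\Eb{|Z_i|^p}$ to $(\mu_1 L)^p$-type quantities and the variance.
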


Quite often in the applications $\mu_r$ for $r=1,\dots,q-1$ are negligibly small and $\mu_q=1$ (e.g. \cite{V2}). In this case, the right hand side of (\ref{main:eq}) becomes
$$e^2\cdot \max\left\{ e^{-\frac{\lambda^2}{Var[f(Y)]\cdot R^q }},e^{-\frac{\lambda^{1/q}}{ L\cdot R}}\right\}$$
and our Theorem \ref{main} implies that tails of multilinear polynomials of central moment bounded random variables have Gaussian-like  distribution for $\lambda<Var[f(Y)]^{q/(2q-1)}$ and constants $L,R$.
Previous work \cite{SS} proved a similar theorem:
\begin{theorem}\label{main1specialSS} \cite{SS}
We are given $n$ independent moment bounded random variables $Y=(Y_1,\dots, Y_n)$ with the same parameter $L$. We are given a multilinear polynomial $f(x)$ with nonnegative coefficients of total power $q$.  Let $f(Y)=f(Y_1,\dots,Y_n)$  then
$$Pr\left[|f(Y)-\E[f(Y)]|\ge \lambda\right]\le e^2\cdot \max\left\{ e^{-\frac{\lambda^2}{\max_{r\in [q]}(\mu_0\mu_r\cdot L^r\cdot R^q) }},\max_{r\in [q]}e^{-\left(\frac{\lambda}{\mu_r L^r R^q}\right)^{1/r}}\right\},$$
where $R$ is some absolute constant.
\end{theorem}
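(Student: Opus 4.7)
My plan is to convert the tail bound into a moment bound and then establish the latter by induction on the degree $q$. First I would center the variables: set $Z_i = Y_i - \E[Y_i]$. Since $Y_i$ is central moment bounded with parameter $L$, the $Z_i$ are independent, zero-mean, and satisfy $\E[|Z_i|^j] \le j L \cdot \E[|Z_i|^{j-1}]$ for all $j \geq 1$. Expanding $f(Y) - \E[f(Y)]$ as a multilinear polynomial in $Z$ gives
$$
g(Z) \;=\; \sum_{\emptyset \neq T} a_T \prod_{v\in T} Z_v,
\qquad
a_T \;=\; \sum_{h:\, {\cal V}(h)\supseteq T} w_h \prod_{v\in {\cal V}(h)\setminus T} \E[Y_v],
$$
with $\E[g(Z)] = 0$ and $Var[f(Y)] = Var[g(Z)] = \sum_{T \neq \emptyset} a_T^2 \prod_{v \in T}\E[Z_v^2]$ by independence. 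The triangle inequality together with $\E[|Z_v|] \le 2\,\E[|Y_v|]$ shows that the smoothness parameters $\tilde\mu_r$ of $g$ with respect to $Z$ satisfy $\tilde\mu_r \leq 3^{q}\mu_r(f,Y)$, a factor that can be absorbed into $R^q$.

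The heart of the proof is the moment inequality
$$
\E[|g(Z)|^{2k}]^{1/(2k)} \;\le\; R_0^{\,q} \,\max\!\left\{\sqrt{k\,Var[g(Z)]},\; \max_{r\in[q]} k^{r}\,\tilde\mu_r L^{r}\right\}
$$
for every integer $k \geq 1$ and an absolute constant $R_0$, which I would prove by induction on $q$. For $q=1$, $g = \sum_i a_i Z_i$, and Rosenthal's inequality for sums of zero-mean independent variables, combined with the bound $\E[|Z_i|^{2k}] \le (2kL)^{2k}$ obtained by iterating moment boundedness, yields the base case. For the inductive step, fix an ordering of coordinates and use the martingale decomposition
$$
g(Z) \;=\; \sum_{j=1}^n Z_j\, h_j(Z_1,\ldots,Z_{j-1}), \qquad h_j \;=\; \partial_j g\big|_{Z_j=\cdots=Z_n=0},
$$
with each $h_j$ multilinear of degree $\le q-1$. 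Burkholder's inequality for martingale differences gives $\E[|g|^{2k}] \le (Ck)^{k} \, \E\bigl[(\sum_j Z_j^2 h_j^2)^{k}\bigr]$, and I would bound the $k$th moment of this quadratic variation by first integrating out the $Z_j^2$ factors (using moment boundedness) and then recursively applying the inductive moment inequality to a multilinear polynomial built from the $h_j$'s.

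Once the moment inequality is proved, Markov's inequality $\Pr[|g|\ge \lambda] \le \E[|g|^{2k}]/\lambda^{2k}$ together with the choices $k \asymp \lambda^2/(R_0^{\,q}\,Var[g])$ for the Gaussian regime and $k \asymp (\lambda/(R_0^{\,q}\tilde\mu_r L^r))^{1/r}$ for each $r \in [q]$ yields (\ref{main:eq}); the $e^2$ prefactor arises from rounding $k$ to an integer. The technically hardest step will be bounding $\E[(\sum_j Z_j^2 h_j^2)^k]$: the analogous quantity in Schudy--Sviridenko only had to be compared against $\mu_0\mu_1 L R^q$, but here it must be compared against $Var[g]$, which can be much smaller. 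Handling this correctly requires exploiting the identity $\E[\sum_j Z_j^2 h_j^2] = Var[g]$ together with independence and the zero-mean property of the $Z_j$'s to decouple $Z_j^2$ from $h_j^2$ in expectation, and carefully tracking how the smoothness parameters $\tilde\mu_r$ transform under the partial-differentiation operation $g \mapsto h_j$.
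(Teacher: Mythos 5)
The paper does not prove Theorem~\ref{main1specialSS}: it is cited from the authors' prior work \cite{SS} and appears here only as a point of comparison for the new Theorem~\ref{main}, so there is no in-paper proof to check your argument against. More importantly, your proposal does not target the stated theorem. You center the variables, derive a moment bound in terms of $Var[g(Z)] = Var[f(Y)]$, invoke \emph{central} moment boundedness of the $Y_i$, and say the final Markov step ``yields (\ref{main:eq})'' --- but (\ref{main:eq}) is the variance-based bound of Theorem~\ref{main}, the new result of this paper, whereas Theorem~\ref{main1specialSS} assumes ordinary moment boundedness and has $\max_{r\in[q]}\mu_0\mu_r L^r R^q$ where you put $Var[f(Y)]$. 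You have in effect written a sketch of Theorem~\ref{main} under the label of Theorem~\ref{main1specialSS}; this matters because obtaining the variance in place of $\mu_0\mu_r L^r$ is precisely the contribution of this paper (Lemma~\ref{lem:varBound} shows the variance bound is strictly stronger).

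Even read as a sketch of Theorem~\ref{main}, your route (martingale decomposition $g=\sum_j Z_j h_j$, Burkholder/Rosenthal, induction on $q$) is entirely different from the paper's hypergraph moment expansion, and the step you identify as hardest is left as a plan rather than executed. The paper extracts one factor of $Var[g(X)]$ per connected component of the moment hypergraph by picking two canonical orderings of the hyperedges (Lemma~\ref{lem:ordering}) and applying the AM--GM split $w_1 w_2 \le (w_1^2+w_2^2)/2$ to the pair of terminal edges in each component (Section~\ref{sec:eachGraph}), then invoking Lemma~\ref{variance}. Your analogous step of bounding $\E[(\sum_j Z_j^2 h_j^2)^k]$ against a power of $Var[g]$ is not clearly achievable by ``decoupling $Z_j^2$ from $h_j^2$ in expectation'': the $h_j$ share variables and are correlated, so the cross-terms in the $k$-th power produce mixed moments $\E[\prod_i Z_{j_i}^2 h_{j_i}^2]$ that do not collapse to $Var[g]^k$ without a quantitative decoupling argument the sketch does not supply, and tracking how the $\tilde\mu_r$ behave under the map $g\mapsto h_j$ is also nontrivial and unaddressed.
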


We show that the parameter $Var[f(Y)]$ in Theorem \ref{main} is always at least as good as the $\max_{r\in [q]}\left(\mu_0\mu_r L^r\right)$ in Theorem \ref{main1specialSS}:
\begin{lemma}\label{lem:varBound}
For a multilinear polynomial $f$ as in Theorem \ref{main} we have
\[
Var[f(Y)] \le 2q4^q\max_{r\in[q]} \left(\mu_0(f,Y) \mu_r(f,Y) 4^rL^r\right).
\]
\end{lemma}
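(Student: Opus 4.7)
My plan is to compute $Var[f(Y)]$ via an orthogonal (``Fourier'') decomposition of $f(Y)$ around its mean into products of centered variables, then bound the coefficients and per-vertex variances using the parameters $\mu_r$ together with the key estimate $\E[Z_v^2] \le 4L\alpha_v$ made available by the central moment bounded hypothesis.

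First I shift to the centered variables $Z_v := Y_v - \E[Y_v]$ (independent, mean zero) and multilinearly expand each factor $\prod_{v \in V(h)}(\E[Y_v] + Z_v)$ to write
$$f(Y) = \sum_{S \subseteq [n]} c_S \prod_{v \in S} Z_v, \qquad c_S := \sum_{h : \mathcal{V}(h) \supseteq S} w_h \prod_{v \in \mathcal{V}(h)\setminus S} \E[Y_v].$$
Because the $Z_v$ are independent and mean zero, $\E\bigl[\prod_{v \in S} Z_v \cdot \prod_{v \in T} Z_v\bigr]$ vanishes unless $S = T$, yielding the identity
$$Var[f(Y)] = \sum_{S \neq \emptyset} c_S^2 \prod_{v \in S} \E[Z_v^2].$$

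Next I would prove two elementary estimates. For (a), the central moment bounded definition with $i = 2$ gives $\E[Z_v^2] \le 2L\,\E[|Z_v|]$, and the triangle inequality $|Z_v| \le |Y_v| + |\E[Y_v]|$ together with Jensen's $|\E[Y_v]| \le \E[|Y_v|] = \alpha_v$ yields $\E[|Z_v|] \le 2\alpha_v$, hence
$$\E[Z_v^2] \le 4L\alpha_v.$$
For (b), taking absolute values inside the definition of $c_S$ and using $|\E[Y_v]| \le \alpha_v$ gives immediately $|c_S| \le \mu_{|S|}(f,Y)$.

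Finally I combine: inserting (a) and the bound $c_S^2 \le |c_S|\mu_{|S|}$ from (b) gives
$$Var[f(Y)] \le \sum_{r=1}^{q} (4L)^{r}\,\mu_r \sum_{|S|=r} |c_S| \prod_{v \in S} \alpha_v.$$
I handle the inner sum by the pointwise estimate $|c_S|\prod_{v \in S}\alpha_v \le \sum_{h \supseteq S} |w_h|\prod_{v \in \mathcal{V}(h)} \alpha_v$ (the missing $\alpha_v$-factors on $S$ are restored), then swap the sums over $S$ and $h$: each hyperedge $h$ contributes $|w_h|\prod_v \alpha_v$ multiplied by $\binom{|\mathcal{V}(h)|}{r} \le \binom{q}{r}$, producing the bound $\binom{q}{r}\mu_0$. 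Since $\sum_{r=1}^q \binom{q}{r} \le 2^q \le 2q \cdot 4^q$, the claimed inequality follows (with a bit of room to spare). The only step with any subtlety is (a): it is the $\alpha_v$-dependent bound $\E[Z_v^2] \le 4L\alpha_v$, rather than the weaker deterministic $\E[Z_v^2] \le 2L^2$ that would also be available, that allows each per-vertex variance contribution to recombine with the $|c_S|$-factor into the full $\mu_0$ product after summing over $S$ of a given size; the weaker deterministic bound would only produce $\sum_{|S|=r}|c_S|$, which in general has no relation to $\mu_0$ (as one already sees in the $q=1$ case when some $\alpha_v$ are tiny).
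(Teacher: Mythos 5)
Your proof is correct, and it is a cleaner variant of the argument the paper actually uses. The core ingredients are shared: expand $f(Y)$ multilinearly in the centered variables $Z_v = Y_v - \Eb{Y_v}$, use independence and zero means to collapse the variance into an $\ell^2$-type sum over subsets (the paper's Lemmas~\ref{variance} and~\ref{orthogonal}), bound $\Eb{Z_v^2} \le 2L\Eb{|Z_v|} \le 4L\Eb{|Y_v|}$ via central moment boundedness, and compare the resulting coefficients to the smoothness parameters $\mu_r$. Where you differ is in the bookkeeping: the paper routes the argument through the auxiliary decomposition $f(Y) = \Eb{f(Y)} + \sum_{i=1}^m g^{(i)}(X)$ into $m \le 2q$ sign-and-degree-homogeneous pieces (a decomposition that Lemma~\ref{generalmoment} needs for other reasons), and then pays for it twice with the crude inequality $\mu_r(w^{(i)}) \le 2^q\mu_r(w)$ from~(\ref{eqn:muiMu}); this is exactly what produces the $2q\cdot 4^q \cdot 4^r$ factors in the statement. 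You instead keep the full centered expansion $f(Y) = \sum_S c_S \prod_{v\in S}Z_v$ in one piece, bound $|c_S| \le \mu_{|S|}$ directly, and account for the sum over $|S|=r$ by absorbing it into $\binom{q}{r}\mu_0$ via the swap-of-sums step. The net result is the sharper intermediate estimate $Var[f(Y)] \le \sum_{r=1}^q \binom{q}{r}(4L)^r\mu_0\mu_r$, which dominates the claim by a comfortable margin (it has $2^q$ in place of $2q\cdot 4^q$). All the individual steps — the orthogonality identity, the $i=2$ case of central moment boundedness, the coefficient bound, and the $\binom{q}{r}$ count — check out, and you correctly flag that the $\alpha_v$-dependent form of the $\Eb{Z_v^2}$ bound is what lets the $\prod_{v\in S}\alpha_v$ factor recombine with the residual hyperedge product into a $\mu_0$; a distribution-free bound such as $\Eb{Z_v^2} \le 2L^2$ would not.
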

Lemma \ref{lem:varBound} implies that our Theorem \ref{main} dominates Theorem \ref{main1specialSS} from \cite{SS} in the common case when   the central moment boundedness parameter $L$ is of the same order as the moment boundedness parameter.

Previous work \cite{SS} showed that Theorem \ref{main1specialSS} has a tight dependence on the parameters $\mu_1,\dots,\mu_q$ up to factors of logarithms and $q^{O(q)}$ in the exponent. That lower bound only applies to bounds that depend only on those parameters and hence Theorem \ref{main}, which additionally depends on $Var[f(Y)]$, does not contradict it.

\subsection{Comparing with Hypercontractivity concentration inequality and other results}

It is well known that considering sums of centered (i.e. $\Eb{Y}=0$) and subgaussian (i.e. $\Eb{|Y|^k}\le L^kk^{k/2}\Eb{|X|}$) random variables improves the concentration bounds.  Namely the concentration arounds its mean stays gaussian even for large values of $\lambda$ unlike the case of the sum of non-centered (even boolean) random variables where the concentration bounds start to behave like the ones of exponential random variable.
Therefore, we can expect a similar phenomenon for the polynomials of independent centered subgaussian random variables, i.e. the concentration bounds for polynomials of independent centered subgaussians should have tighter concentration around the mean for larger values of $\lambda$.

Two specific examples of such variables are centered Gaussian and Rademacher ($+1$ or $-1$ with probability $1/2$) random variables. There are two concentration inequalities known in the literature specific for that setting
\begin{theorem}[Hypercontractivity Concentration Inequality]\label{hyper}
Consider a multilinear degree $q$ polynomial $f(Y)=f(Y_1,\dots,Y_n)$ of independent Normal or Rademacher random variables $Y_1,\dots,Y_n$.  Then
$$Pr[|f(Y)-\E[f(Y)]|\ge \lambda] \le e^2\cdot e^{-\left(\frac{\lambda^2}{R\cdot Var[f(Y)]}\right)^{1/q}},$$
where $Var[f(Y)]$ is the variance of the random variable $f(Y)$ and $R>0$ is   an absolute constant.
\end{theorem}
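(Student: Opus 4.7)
The plan is to reduce Theorem~\ref{hyper} to the classical hypercontractive moment bound for multilinear polynomials of Gaussians or Rademachers and then convert the moment estimate into a tail bound via Markov's inequality optimized over the moment order. Let $\sigma^2 = Var[f(Y)]$ and set $Z = f(Y) - \Eb{f(Y)}$; then $Z$ is a mean-zero multilinear polynomial of degree $q$ in the same independent variables, and $\Eb{Z^2} = \sigma^2$.

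First I would invoke the Bonami-Beckner inequality (in the Rademacher case) and Nelson's Gaussian hypercontractivity theorem (in the Gaussian case), both of which state that for every real $p \ge 2$ and every degree-$q$ multilinear polynomial of such variables,
\begin{equation*}
\Eb{|Z|^p}^{1/p} \le (p-1)^{q/2}\,\Eb{Z^2}^{1/2} = (p-1)^{q/2}\,\sigma.
\end{equation*}
The sharp $(p-1)^{q/2}$ dependence here is the crux of the argument: any weaker polynomial-in-$p$ growth would spoil the $(1/q)$-th power appearing in the final exponent.

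Next, I would apply Markov's inequality to $|Z|^p$ and substitute the moment estimate to get, for every real $p \ge 2$,
\begin{equation*}
\pr{|Z| \ge \lambda} \le \frac{\Eb{|Z|^p}}{\lambda^p} \le \left(\frac{(p-1)^{q/2}\sigma}{\lambda}\right)^{\!p}.
\end{equation*}
Setting the parenthesized ratio equal to $1/e$, that is, choosing $p - 1 = (\lambda/(e\sigma))^{2/q}$, produces
\begin{equation*}
\pr{|Z| \ge \lambda} \le e^{-p} \le \exp\!\left(-\left(\frac{\lambda^2}{e^2\sigma^2}\right)^{\!1/q}\right),
\end{equation*}
which matches the claimed inequality with absolute constant $R = e^2$ (the extra prefactor $e^2$ in the theorem is harmless slack).

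The only subtlety to dispose of is the hypercontractivity constraint $p \ge 2$, which in this parametrization reads $\lambda \ge e\sigma$. In the complementary regime $\lambda < e\sigma$ one has $(\lambda^2/(R\sigma^2))^{1/q} < 1$ for $R = e^2$, so the target right-hand side $e^2\exp(-(\lambda^2/(R\sigma^2))^{1/q})$ already exceeds $e^2 \cdot e^{-1} = e > 1$ and the inequality holds vacuously. Thus the main (and really only) obstacle is the hypercontractive moment bound itself; once that is black-boxed, the moment-to-tail conversion is a standard short calculation, so the difficulty of the theorem is entirely encapsulated in citing the right hypercontractive estimate.
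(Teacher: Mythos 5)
Your proof is correct, but note that the paper does not actually prove Theorem~\ref{hyper}: it is quoted as a known result from the literature (the authors point to Janson \cite{J} for its history) and is used only for comparison with their Theorem~\ref{main}. Your derivation --- the hypercontractive moment bound $\Eb{|Z|^p}^{1/p}\le (p-1)^{q/2}\Eb{Z^2}^{1/2}$ for degree-$q$ multilinear polynomials of Rademacher/Gaussian variables, followed by Markov's inequality with $p$ tuned so that $(p-1)^{q/2}\sigma/\lambda=1/e$, and the observation that the regime $p<2$ (i.e.\ $\lambda<e\sigma$) is vacuous because the right-hand side exceeds $1$ --- is exactly the standard argument one finds in those references, and the constants check out. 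The only point worth making explicit is that when the Normal variables are not standardized you should first rewrite $f(Y)-\Eb{f(Y)}$ as a multilinear polynomial in the centered variables $Y_i-\Eb{Y_i}$ and rescale each coordinate to unit variance before invoking hypercontractivity; this is routine and does not affect the conclusion.
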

The history of these concentration and corresponding moment inequalities is quite rich see S. Janson \cite{J} (Sections V and VI).  Latala \cite{L} tightened these inequalities for Normal random variables using smoothness  parameters similar but incomparable to ours (see the next Section).

Unfortunately, the Hypercontractivity  and even Latala Concentration Inequalities do not strictly dominate our concentration inequality (Theorem \ref{main}). Our concentration behaves better for small values of $\lambda$ with respect to Hypercontractivity Concentration Inequality and for some polynomials we beat the Latala bounds  for large values of $\lambda$ since our smoothness parameters are incomparable.

The conclusion is that it is likely that there exists a yet to be discovered concentration inequality for polynomials of independent centered subgaussian random variables that dominates ours (Theorem \ref{main}), Hypercontractivity (Theorem \ref{hyper}) and Latala's \cite{L} concentration inequalities in this setting. Deriving such an inequality is a challenging open problem.

In our previous work \cite{SS}, we provide an extensive comparison of Theorem \ref{main1specialSS} and its analog for general polynomials with various known concentration inequalities for polynomials.
Mossel, O'Donnell and Oleszkiewicz \cite{MOO} showed that the distribution of a multilinear polynomial of indepedent random variables is approximately invariant with respect to the distribution of the random variables as long as the random variables have mean 0 and variance 1. In particular they bound
\begin{align*}
\left|\pr{ f(X_1,\dots,X_n) \ge \lambda } - \pr{f(G_1,\dots,G_n) \ge \lambda}\right|
\end{align*}
where $f$ is a multilinear polynomial, $X_1,\dots,X_n,G_1,\dots,G_n$ are independent random variables with mean 0 and variance 1, and $G_1,\dots,G_n$ have a Gaussian distribution. Such bounds can be considered to be a generalization of the Berry-Esseen type bounds because in the linear case the sum of Gaussians $f(G_1,\dots,G_n)$ has a Gaussian distribution. Note that as usual central limit theorem or invariance principle type of results have very wide range of applicable random variables but weaker concentration bounds (polynomial instead of exponential).

\subsection{Our Techniques}
Our work follows the same general scheme of the moment computation method developed in the proof of Theorem \ref{main1specialSS} in \cite{SS} but there are many subtle differences in the proofs since we basically want to replace each term $\mu_0\mu_q$ in the proof of the {\bf Initial Moment Lemma} from \cite{SS} with variance. For example, our Section \ref{sec:eachGraph} and the analogous Section 2.2 in \cite{SS} are devoted to bounding a certain sum (the sum over $\pi$ in (\ref{inequality3})).
Previous work \cite{SS} gets a factor $\mu_t$ for some $0 \le t \le q$ for each hyperedge in a certain hypergraph $G'$. Each connected component of $G'$ includes two hyperedge weights, call them $w_1$ and $w_2$, which contribute to a $\mu_0$ and a $\mu_q$ factor respectively in \cite{SS}. Instead of having $w_1 w_2$ contribute to a $\mu_0 \mu_q$ we do the following. We bound $w_1w_2 \le (w_1^2 + w_2^2)/2$ and then $w_1^2$ and $w_2^2$ each contribute to a factor of variance.  Our Ordering Lemma in Section \ref{technical} is different from the analogous statement in \cite{SS}. To transition from Initial Moment Lemma to the General Even Moment Lemma we use certain orthogonality properties of multi-linear polynomials which do not seem to hold for general polynomials. Our key property of random variables (central moment boundness) is different from the moment boundness in \cite{SS} which forced us to re-prove that the classical classes of discrete and continuous random variables satisfy that property.

While we were able to extend the Theorem \ref{main1specialSS} in \cite{SS} to the case of general polynomials we were not able to prove a similar extension of Theorem \ref{main}. While we believe that such a statement is true, it seems it would require another property of random variables different from moment boundness or central moment boundness.

\subsection{Outline}

The high-level organization of our analysis follows \cite{SS}. Sections \ref{sec:centered} and \ref{sec:intermediate} state and prove key lemmas on the moments of polynomials of variables with zero expectation. Section \ref{technical} proves various technical lemmas that are omitted from the main flow. Section \ref{ProofMainGen} states and proves bounds on the moments of polynomials with arbitrary expectation. Section \ref{ProofMain} uses those bounds and Markov's inequality to prove Theorem \ref{main}.  Section \ref{sec:exampleMom} shows that a wide variety of classical random variables are central moment bounded.

\section{Moment Lemma for Centered Multilinear Polynomials}\label{sec:centered}

The proof of the Theorem \ref{main} will follow from the application of the Markov's inequality to the upper bound on the $k$-th moment of the polynomial in question. The first step is to look at moments of ``centered'' multilinear  polynomials that replace $Y_v$ with $Y_v-\E\left[Y_v\right]$.
\begin{lemma}[Initial Moment Lemma]\label{lem:momentPrelim}
 We are given a hypergraph $H = ([n], {\cal H})$, $n$ independent central moment bounded random variables $Y=(Y_1,\dots, Y_n)$ with the same parameter $L>0$ and a polynomial $g(x)$
   with nonnegative coefficients $w_h \ge 0$ such that every monomial (or hyperedge) $h\in {\cal  H}$ has power (or cardinality) exactly $q$. We define random variables $X_v=Y_v-\E\left[Y_v\right]$ for $v\in [n]$.
   Then for any integer $k \ge 1$ we have
 \begin{eqnarray}\label{inequality44}
\left|\E\left[g(X)^k\right]\right| & \le & \max_{{\bar \sigma}} \left\{R_3^{qk} L^{qk-q\sigma_0-\ell}\cdot k^{qk-(q-1)\sigma_0-\ell}\cdot Var[g(X)]^{\sigma_0}\cdot \left(\prod_{t=1}^{q}\mu_t(g,Y)^{\sigma_t} \right) \right\}
\end{eqnarray}
where $R_3\ge 1$ is some absolute constant, $\ell = \sum_{t=0}^{q}(q-t)\sigma_t$, and ${\bar \sigma}=(\sigma_0,\dots,\sigma_q)$.
The maximum is over all non-negative integers $\sigma_t$, $0 \le t \le q$ satisfying  $2\sigma_0+\sum_{t=1}^q\sigma_t=k$ and $\ell \le qk/2$.
\end{lemma}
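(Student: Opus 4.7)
The plan is to follow the moment-computation framework from \cite{SS}, but to modify it so that each ``paired component'' in the combinatorial decomposition yields a factor of $Var[g(X)]$ rather than $\mu_0 \mu_q$, implementing the $w_1 w_2 \le (w_1^2 + w_2^2)/2$ idea described in the ``Our Techniques'' section. First I would expand
\[
\E[g(X)^k] = \sum_{(h_1,\dots,h_k)} \Bigl(\prod_{i=1}^k w_{h_i}\Bigr)\,\E\Bigl[\prod_{i=1}^k \prod_{v\in h_i} X_v\Bigr]
\]
and use independence to factor the expectation as $\prod_v \E[X_v^{c_v}]$, where $c_v$ is the number of hyperedges $h_i$ containing $v$. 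Since $X_v$ is centered, the expectation vanishes unless every $c_v$ is either $0$ or $\ge 2$. On the surviving tuples I apply central moment boundedness iteratively down to the second moment, so that for $c_v \ge 2$
\[
\E[|X_v|^{c_v}] \le \tfrac{c_v!}{2}\, L^{c_v-2}\, \E[X_v^2] \le c_v!\, L^{c_v-2}\, Var(Y_v).
\]
I would also record the identity (valid because $|h|=q\ge 1$ and each $X_v$ is centered) $\E[g(X)]=0$, hence $Var[g(X)] = \sum_h w_h^2 \prod_{v\in h} Var(Y_v)$, which will be the ``sink'' for all the variance factors produced below.

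Next, I would group the surviving $k$-tuples by a combinatorial \emph{type} encoded by $\bar\sigma=(\sigma_0,\dots,\sigma_q)$. The type records: (i) a choice of $\sigma_0$ disjoint ``paired components'', each consisting of two hyperedges $h_{i_1},h_{i_2}$ with identical vertex sets whose $q$ vertices have multiplicity exactly $2$ in the whole tuple and are shared only within the pair; (ii) a clustering of the remaining $k-2\sigma_0 = \sum_{t\ge 1}\sigma_t$ hyperedges into unpaired components, where $\sigma_t$ of these components have exactly $t$ internally shared vertices (so $q-t$ ``free'' vertices each). The quantity $\ell = \sum_{t=0}^q (q-t)\sigma_t$ then counts the total number of free vertices over all components, and since every internally shared vertex is shared by at least two hyperedge-slots, the inequality $\ell \le qk/2$ is automatic. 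For each type I invoke the Ordering Lemma (Section~\ref{technical}) on the unpaired components: the weighted sum over choices of a hyperedge sharing a fixed core of $t$ vertices is bounded by $\mu_t(g,Y)$, contributing a factor $\prod_{t\ge 1} \mu_t^{\sigma_t}$.

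The novel step is the treatment of paired components. For a paired component with hyperedge weights $w_{h_1}, w_{h_2}$ (where $h_1,h_2$ share the same vertex set), I use $w_{h_1} w_{h_2} \le \tfrac12(w_{h_1}^2 + w_{h_2}^2)$ so that the contribution per pair can be resummed as
\[
\sum_h w_h^2 \prod_{v\in h} \E[X_v^2] \;=\; Var[g(X)],
\]
giving one power of $Var[g(X)]$ per paired component and hence the factor $Var[g(X)]^{\sigma_0}$. Collecting everything, the $L$-powers accumulate as $L^{\sum_v (c_v-2)\mathbf{1}[c_v\ge 2]}$, which over all components equals $L^{qk - q\sigma_0 - \ell}$ (each paired pair uses no $L$'s beyond the variance extraction, and each unpaired hyperedge contributes $q-t$ free vertex slots that get $L$-factored). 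The combinatorial count of tuples realizing a given type — choosing which of the $k$ positions form which components, and choosing the shared-vertex identities — produces the factor $k^{qk-(q-1)\sigma_0-\ell}$ after absorbing the $c_v!$ terms from the moment bound into a fixed constant $R_3$ raised to the power $qk$.

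The main obstacle will be setting up the type decomposition in Step~2 correctly: paired components must be honest pairs of identical hyperedges (so that the sum-of-squares bound and the variance identity can be applied), and at the same time the residual hyperedges must decompose into components amenable to the Ordering Lemma. Orchestrating these two constraints while matching the $k$-combinatorial factor, the $L$-exponent $qk-q\sigma_0-\ell$, and the $k$-exponent $qk-(q-1)\sigma_0-\ell$ simultaneously — and in particular ensuring the maximum over $\bar\sigma$ truly dominates the sum rather than just each term — is where the bulk of the technical work lies, and where the present proof differs in detail from its analogue in \cite{SS}.
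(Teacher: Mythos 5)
Your high-level plan — expand $\E[g(X)^k]$ over $k$-tuples, restrict to tuples where every vertex has degree $\ge 2$, and convert $w_1 w_2 \le \frac12(w_1^2+w_2^2)$ into a $Var[g(X)]$ factor via Lemma~\ref{variance} — is the right skeleton. However, the heart of the argument, namely \emph{where} the squared weights live and \emph{how} they are paired, is wrong in a way that makes the bound unreachable.

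You define a ``paired component'' to be two hyperedges $h_{i_1},h_{i_2}$ with \emph{identical} vertex sets whose vertices appear with multiplicity exactly 2 in the whole tuple, and you claim $\sigma_0$ counts those. That classification only captures a tiny slice of the surviving tuples. A generic surviving labeled hypergraph $G'$ has connected components containing three or more non-identical hyperedges, and none of them form such a pair. Your scheme must then put all those hyperedges into the ``unpaired'' bucket. But when you peel hyperedges off such a component one at a time (as you must in order to use the $\mu_t$ bounds), the \emph{last} hyperedge removed from each connected component has all $q$ of its vertices already determined by the earlier hyperedges, so its contribution is $\mu_0(g,Y)$. The lemma's right-hand side contains $Var[g(X)]^{\sigma_0}$ and $\prod_{t\ge1}\mu_t^{\sigma_t}$ but \emph{no} $\mu_0$ factor, and $\mu_0\mu_q$ cannot be bounded by $Var[g(X)]$ in general (indeed Lemma~\ref{lem:varBound} goes the other way: $Var$ is the smaller quantity). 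So your decomposition would leave a $\mu_0$ factor you have no way to eliminate.

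What the paper actually does is quite different and hinges on the Ordering Lemma (Lemma~\ref{lem:ordering}) in a way your proposal does not use. For \emph{each} connected component of $G'$ one chooses two \emph{distinct} (not identical) hyperedges, one placed into $S_1$ and one into $S_2$, with $|S_1|=|S_2|=c$. The AM-GM bound is applied to the product $\prod_{h\in S_1\cup S_2} w_{\pi(h)}$, producing two terms, one with $\prod_{h\in S_1}w_{\pi(h)}^2$ and one with $\prod_{h\in S_2}w_{\pi(h)}^2$. The Ordering Lemma guarantees two canonical orderings, one ending with $S_1$ and one ending with $S_2$, both of which keep the number of connected components equal to $c$ at every intermediate step. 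Then for whichever term one is analyzing, the hyperedges with squared weights are removed \emph{last}, at which point their entire vertex sets are free, and the sum over placements of each such hyperedge becomes exactly $\sum_h w_h^2 \prod_{v\in h}\E[X_v^2]=Var[g(X)]$. So $\sigma_0=c$ is the number of connected components, not the number of identical pairs, and the variance factors are harvested from the \emph{ends} of carefully constructed orderings. The ``$\mu_0$ problem'' never arises because the hyperedge that would have given $\mu_0$ has had its weight squared and becomes a $Var$ factor instead. This two-ordering, two-set structure ($S_1$ and $S_2$ disjoint, one processed first and one last in each ordering) is exactly what your proposal is missing, and without it the combinatorics does not close.

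A secondary issue: your constraint $k-2\sigma_0 = \sum_{t\ge1}\sigma_t$ with $\sigma_t$ counting \emph{components} forces every unpaired component to contain a single hyperedge, which immediately contradicts the requirement that every vertex have degree $\ge 2$. In the paper $\sigma_t$ counts, for the middle steps $s=c+1,\dots,k-c$, the number of steps at which the removed hyperedge $h^{(s)}$ has exactly $q-t$ vertices of degree one in $G'_s$; this is a per-hyperedge, not per-component, quantity, and it is well-defined only relative to a fixed ordering coming from Lemma~\ref{lem:ordering}.
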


Note that the constraint $2\sigma_0+\sum_{t=1}^q\sigma_t=k$ in Lemma \ref{lem:momentPrelim} implies $\sigma_0 \le k/2$ hence the powers of $L$ and $k$  in (\ref{inequality44}) are non-negative.

\begin{proof}
Fix hypergraph $H=([n],{\cal H})$, random variables $Y=(Y_1,\dots,Y_n)$, non-negative weights $\{w_h\}_{h \in {\cal H}}$, an integer  $k$  and total power $q$.
 Without loss of generality we assume that ${\cal H}$ is the complete uniform hypergraph (setting additional edge weights to 0 as needed), i.e.\ ${\cal H}$ includes every possible hyperedge over vertex set $[n]$ with $q$ vertices. Note that the the cardinality of the hyperedge is equal to the total power of the corresponding monomial in the polynomial $g(x)$.

  A \emph{labeled} hypergraph $G=({\cal V}(G), {\cal H}(G))$ consists of a set of vertices ${\cal V}(G)$ and a \emph{sequence} of $k$ (not necessarily distinct) hyperedges ${\cal H}(G)=<h_1,\dots,h_k>$. In other words a labeled hypergraph is a hypergraph whose $k$ hyperedges are given unique labels from $[k]$.
We write e.g.\ $\prod_{h \in {\cal H}(G)} w_h$ as a shorthand for $\prod_{i=1}^k w_{h_i}$ where ${\cal H}(G)=<h_1,\dots,h_k>$; in particular duplicate hyperedges count multiple times in such a product.

Consider the sequence of hyperedges $h_1,\dots,h_k\in {\cal  H}$ from our original hypergraph $H$. These hyperedges define a labeled hypergraph $H(h_1,\dots,h_k)$ with vertex set $\cup_{i=1}^k {\cal V}(h_i)$ and hyperedge sequence $h_1,\dots,h_k$.
Note that the vertices of $H(h_1,\dots,h_k)$ are labeled by the indices from $[n]$ and the edges are labeled by the indices from $[k]$.
Note also that some hyperedges in $H(h_1,\dots,h_k)$ could span the same set of vertices, i.e.\ they are multiple copies of the same hyperedge in the original hypergraph $H$.
 Let ${\cal P}(H,k)$ be the set of all such edge and vertex labeled hypergraphs that can be generated by any $k$ hyperedges from $H$.
 We say that the \emph{degree} of a vertex (in a hypergraph) is the number of hyperedges it appears in.
Let ${\cal P}_2(H,k)\subseteq {\cal P}(H,k)$ be the set of such labeled hypergraphs where each vertex has degree at least two.
We split the whole proof into more digestible pieces by subsections.

\subsection{Changing the vertex labeling}

In this section we will show how to transform the formula for the $k$-th moment to have the summation over the hypergraphs that have its own set of labels instead of being labeled by the set $[n]$. Let $X_{v}=Y_v - \Eb{Y_v}$ for $v\in h$.
By linearity of expectation, independence of random variables $X_{v}$ for different vertices $v\in {\cal V}$ and definition of ${\cal P}(H,k)$ we obtain
\begin{eqnarray}
\left|\E\left[g(X)^k\right]\right|
&=&\left|\sum_{h_1,\dots,h_k\in {\cal  H}}\E\left[\prod_{i=1}^k\left(w_{h_i}\prod_{v\in \ve{h_i}}X_{v}\right)\right]\right|\nonumber \\
&=&\left|\sum_{G\in  {\cal P}(H,k)}\E\left[\prod_{h\in {\cal H}(G)}\left(w_{h}\prod_{v\in \ve{h}}X_{v}\right)\right]\right|\nonumber \\
&=&\left|\sum_{G\in  {\cal P}(H,k)}\left(\prod_{h\in {\cal H}(G)}w_{h} \right)\left(\prod_{v\in {\cal V}(G)}\E\left[\prod_{h\in {\cal H}(G)|v\in \ve{h}}X_{v}\right]\right)\right| \nonumber \\
&=&\left|\sum_{G\in  {\cal P}_2(H,k)}\left(\prod_{h\in {\cal H}(G)}w_{h} \right)\left(\prod_{v\in {\cal V}(G)}\E\left[\prod_{h\in {\cal H}(G)|v\in \ve{h}}X_{v}\right]\right) \right|  \label{inequality0.1} \\
&\le &\sum_{G\in  {\cal P}_2(H,k)}\left(\prod_{h\in {\cal H}(G)}w_{h} \right)\left(\prod_{v\in {\cal V}(G)}\left|\E\left[\prod_{h\in {\cal H}(G)|v\in \ve{h}}X_{v}\right]\right|\right) \nonumber \\
&\le &\sum_{G\in  {\cal P}_2(H,k)}\left(\prod_{h\in {\cal H}(G)}w_{h} \right)\left(\prod_{v\in {\cal V}(G)}\E\left[ \left|X_{v}\right|^{d_v(G)}\right]\right) , \label{inequality1}
\end{eqnarray}
where the   equality (\ref{inequality0.1}) follows from the fact that $\E[X_{v}]=0$  for all $v\in \ve{h}$ and $d_v(G)$ is the degree of vertex $v$ in the hypergraph $G$.

Note that a labeled hypergraph $G\in {\cal P}_2(H,k)$ could have the number of vertices ranging from $q$ up to $kq/2$ since every vertex has degree at least two. For $k$ and $q$  clear from context, let ${\cal S}_2(\ell)$ be the set of labeled hypergraphs
with vertex set $[\ell]$ having $k$ hyperedges such that each hyperedge has cardinality exactly  $q$  and every vertex has degree at least 2. For each hypergraph $G\in {\cal S}_2(\ell)$ the vertices are labeled by the indices from the set $[\ell]$ and the hyperedges are labeled by the indices from the set $[k]$.
  Let $M(S)$ for $S\subseteq [\ell]$ be the set of all possible injective functions $\pi:S\rightarrow [n]$, in particular
  $M([\ell])$ is the set of all possible injective functions $\pi:[\ell]\rightarrow [n]$. We will use the notation $\pi(h)$ for a copy of hyperedge $h \in {\cal H}(G)$ with its vertices relabeled by injective function $\pi$, i.e.\ ${\cal V}(\pi(h))=\{\pi(v) : v \in {\cal V}(h)\}$. We claim that
\begin{eqnarray}
\lefteqn{\sum_{G\in  {\cal P}_2(H,k)}\left(\prod_{h\in {\cal H}(G)}w_{h} \right)\left(\prod_{v\in {\cal V}(G)}\Eb{\left|X_{v}\right|^{d_v(G)}}\right)} \nonumber \\
&=& \sum_{\ell=q}^{kq/2}\frac{1}{\ell!}\sum_{G'\in {\cal S}_2(\ell)}\sum_{\pi\in M([\ell])} \left(\prod_{h\in {\cal H}(G')}w_{\pi(h)} \right)\left(\prod_{u\in {\cal V}(G')}\Eb{\left|X_{\pi(u)}\right|^{d_u(G')}}\right). \label{inequality2}
\end{eqnarray}
Indeed, every labeled hypergraph $G=({\cal V}(G),{\cal H}(G))\in  {\cal P}_2(H,k)$ on $\ell$ vertices has $\ell!$ labeled hypergraphs $G'=({\cal V}(G'),{\cal H}(G'))\in {\cal S}_2(\ell)$ that differ from $G$ by vertex labellings only. Each of those hypergraphs has one corresponding mapping $\pi$ that maps its $\ell$ vertex labels into vertex labels of hypergraph $G\in {\cal P}_2(H,k)$.

Then, combining (\ref{inequality1}) and (\ref{inequality2}) we obtain
\begin{eqnarray}\label{inequality3}
\left|\E\left[g(Y)^k\right]\right|\le \sum_{\ell=q}^{kq/2}\frac{1}{\ell!}\sum_{G'\in {\cal S}_2(\ell)} \sum_{\pi\in M([\ell])} \left(\prod_{h\in {\cal H}(G')}w_{\pi(h)} \right)\left(\prod_{u\in {\cal V}(G')}\Eb{\left|X_{\pi(u)}\right|^{d_u(G')} }\right).
\end{eqnarray}

\subsection{Estimating the term for each hypergraph $G'$}\label{sec:eachGraph}

We now fix integer $\ell$ and labeled hypergraph $G'\in {\cal S}_2(\ell)$.
Let $c$ be the number of \emph{connected components} in $G'$, i.e.\ $c$ is a maximal number such that the vertex set ${\cal V}(G')$ can be partitioned into $c$ parts ${\cal V}_1,\dots,{\cal V}_c$ such that for each hyperedge $h\in {\cal H}(G')$ and any $j\in [c]$ if $\ve{h}\cap {\cal V}_j\neq \emptyset$ then $\ve{h}\subseteq {\cal V}_j$. Intuitively, we can split the vertex set of $G'$ into $c$ components such that there are no hyperedges that have vertices in two or more components.
 By definition of degree $\sum_{v\in {\cal V}(G')}d_v=qk$  and $d_v\ge 2$ for all $v\in {\cal V}(G')$.

We use a canonical ordering $h^{(1)},\dots,h^{(k)}$ of the hyperedges in ${\cal H}(G')$ that will be specified later in Lemma \ref{lem:ordering}. (This canonical ordering is distinct from and should not be confused with the ordering of the hyperedges inherent in a labeled hypergraph.)
We iteratively remove hyperedges from the hypergraph $G'$ in this order. Let $G'_s=({\cal V}_s',{\cal H}_s')$ be the hypergraph defined by the hyperedges ${\cal H}_s'=h^{(s)},\dots,h^{(k)}$ and vertex set ${\cal V}_s'=\cup_{h \in {\cal H}_s'} \ve{h}$. In particular $G'_1$ is identical to $G'$ except for the order of the hyperedges.
 Let $V_s$ be the vertices of the hyperedge $h^{(s)}$ that have degree one in the hypergraph $G'_s$, i.e.\ ${\cal V}_{s+1}'={\cal V}_s'\setminus V_s$.  By definition, $0\le |V_s|\le q$. Moreover, $0\le |V_s|\le q-1$
 for $s=1,\dots,k-c$ by Lemma \ref{lem:ordering} since the hyperedge $h^{(s)}$ must be connected with at least one of remaining hyperedges.
By the properties of the canonical ordering $h^{(1)},\dots,h^{(k)}$ from Lemma \ref{lem:ordering} we know that the first $c$ edges (set $S_2$ of hyperedges) in that ordering belong to different connected components. Since degree of each node is at least two we obtain that $V_1=\dots=V_c=\emptyset$.

Analogously, we consider the second canonical ordering  ${\tilde h}^{(1)},\dots,{\tilde h}^{(k)}$ from Lemma \ref{lem:ordering} and define analogous notions.
Let ${\tilde G}'_s=({\tilde {\cal V}}_s',{\tilde{\cal H}}_s')$ be the hypergraph defined by the hyperedges ${\tilde {\cal H}}_s'={\tilde h}^{(s)},\dots,{\tilde h}^{(k)}$ and vertex set ${\tilde {\cal V}}_s'=\cup_{h \in {\tilde {\cal H}}_s'} \ve{h}$. Let ${\tilde V}_s$ be the vertices of the hyperedge ${\tilde h}^{(s)}$ that have degree one in the hypergraph ${\tilde G}'_s$, i.e.\ ${\tilde {\cal V}}_{s+1}'={\tilde {\cal V}}_s'\setminus {\tilde V}_s$.  The first $c$ edges in the second canonical  ordering ${\tilde h}^{(1)},\dots,{\tilde h}^{(k)}$ define the set $S_1$ of hyperedges that belong to different connected components. Therefore, ${\tilde V}_1=\dots={\tilde V}_c=\emptyset$.

 Let $S_1$ and $S_2$ be the sets of special hyperedges defined in Lemma \ref{lem:ordering}. Each set $S_i$ contains exactly one hyperedge per connected component of hypergraph $G'$ and therefore, hyperedges belonging to the same $S_i$ are disjoint. Let $W'_1=\cup_{h\in S_1} {\cal V}(h)$  be the set of vertices incident to    hyperedges in $S_1$ and $W'_2=\cup_{h\in S_2} {\cal V}(h)$  be the set of vertices incident to hyperedges in $S_2$.
Note that $|W_1'|=|W_2'|=qc$.

We apply the standard fact  $ab\le \frac{a^2+b^2}{2}$ to the $\prod_{h\in S_1\cup S_2}w_{\pi(h)}$ in the last term of the inequality (\ref{inequality3}) and obtain
\begin{eqnarray}\label{aux1}
 \left(\prod_{h\in S_1\cup S_2}w_{\pi(h)} \right)\le \frac{1}{2}\left(\prod_{h\in S_1}w^2_{\pi(h)}\right)+\frac{1}{2}\left(\prod_{h\in S_2}w^2_{\pi(h)} \right).
\end{eqnarray}

We will use the notation $d_u$ instead of $d_u(G')$. By central moment boundness, we estimate
\begin{align}\label{aux2}
\prod_{u\in {\cal V}(G')}\Eb{\left|X_{\pi(u)}\right|^{d_u } }
 &\le    \left(\prod_{u\in W_1'} \frac{d_u!}{2} L^{d_u-2}\Eb{ X_{\pi(u)}^2 }\right)
 \left(\prod_{u\in {\cal V}(G')\setminus W_1'} d_u!L^{d_u-1}\Eb{\left|X_{\pi(u)}\right|}\right) \nonumber \\
 &\le   2^{-qc}L^{qk-qc-\ell}  \left(\prod_{u\in {\cal V}(G')} d_u!  \right) \left(\prod_{u\in W_1'}  \Eb{ X_{\pi(u)}^2 } \right)
 \prod_{u\in {\cal V}(G')\setminus W_1'} \Eb{\left|X_{\pi(u)}\right|} \nonumber \\
 &\le 2^{\ell-2qc}L^{qk-qc-\ell}  \left(\prod_{u\in {\cal V}(G')} d_u!  \right) \left(\prod_{u\in W_1'}  \Eb{ X_{\pi(u)}^2 } \right)
 \prod_{u\in {\cal V}(G')\setminus W_1'} \Eb{\left|Y_{\pi(u)}\right|}
\end{align}
where the last inequality uses the inequality $\Eb{\left|X_{\pi(u)}\right|} = \Eb{\left|Y_{\pi(u)}-\Eb{Y_{\pi(u)}}\right|} \le \Eb{\left|Y_{\pi(u)}\right|} + \left|\Eb{Y_{\pi(u)}}\right| \le 2 \Eb{\left|Y_{\pi(u)}\right|}$.

Analogously,
\begin{eqnarray}\label{aux3}
\prod_{u\in {\cal V}(G')}\Eb{\left|X_{\pi(u)}\right|^{d_u } }\le  2^{\ell-2qc}L^{qk-qc-\ell}  \left(\prod_{u\in {\cal V}(G')} d_u!\right)\left( \prod_{u\in W_2'}  \Eb{ X_{\pi(u)}^2 }\right)
 \prod_{u\in {\cal V}(G')\setminus W_2'} \Eb{\left|X_{\pi(u)}\right|}.
\end{eqnarray}
Recall $[\ell]= {\cal V}(G')$, ${\cal V}_s'={\cal V}(G')\setminus \cup_{t=1}^{s-1} V_t$ for $s=1,\dots,k$ and $V_1=\dots=V_c=\emptyset$. Analogously,
${\tilde {\cal V}}_s'={\cal V}(G')\setminus \cup_{t=1}^{s-1} {\tilde V}_t$ for $s=1,\dots,k$ and ${\tilde V}_1=\dots={\tilde V}_c=\emptyset$.
For each $s=c+1,\dots,k-c$, we will use the notations
 \begin{eqnarray}\label{def}
 \Upsilon_s(\pi)&=& \left(\prod_{u\in W_1'}  \Eb{ X_{\pi(u)}^2 }\right)
 \prod_{u\in {\cal V}_s'\setminus W_1'} \Eb{\left|Y_{\pi(u)}\right|},\nonumber \\
 {\tilde \Upsilon}_s(\pi)&=&\left(\prod_{u\in W_2'}  \Eb{ X_{\pi(u)}^2 }\right)
 \prod_{u\in {\tilde {\cal V}}_s(G')\setminus W_2'} \Eb{\left|Y_{\pi(u)}\right|}.
\end{eqnarray}

Therefore, combining inequalities  (\ref{aux1}), (\ref{aux2}), (\ref{aux3}) and notations (\ref{def}), for each graph $G'\in {\cal S}_2(\ell)$,  we obtain
 \begin{eqnarray}\label{transformed}
&&\sum_{\pi\in M([\ell])} \left(\prod_{h\in {\cal H}(G')}w_{\pi(h)} \right)\left(\prod_{u\in {\cal V}(G')}\Eb{\left|X_{\pi(u)}\right|^{d_u(G')} }\right)\nonumber\\
&&\le 2^{\ell-2qc-1}L^{qk-qc-\ell}\left( \prod_{u\in {\cal V}(G')} d_u!\right) \cdot \nonumber \\
&&\left(\sum_{\pi\in M([\ell])} \left(\prod_{h\in S_1}w^2_{\pi(h)}\right)\cdot \left( \prod_{h=h^{(c+1)},\dots,h^{(k-c)}} w_{\pi(h)} \right)\Upsilon_1(\pi) \right. \nonumber \\
&&+\left. \sum_{\pi\in M([\ell])} \left(\prod_{h\in S_2}w^2_{\pi(h)} \right)\cdot \left( \prod_{h=\tilde h^{(c+1)},\dots,\tilde h^{(k-c)}}w_{\pi(h)} \right){\tilde \Upsilon}_1(\pi) \right) \nonumber \\
\end{eqnarray}

We now analyze two terms in the inequality (\ref{transformed}) separately using different canonical orderings of the hyperedges from Lemma \ref{lem:ordering}.
We consider $\sum_{\pi\in M([\ell])} \left(\prod_{h\in S_1}w^2_{\pi(h)}\right)\cdot \left( \prod_{h=h^{(c+1)},\dots,h^{(k-c)}}w_{\pi(h)} \right)\Upsilon_1(\pi)$ and the corresponding canonical ordering of the hyperedges
$h^{(c+1)},\dots,h^{(k)}$. For each $s=c+1,\dots,k-c$ we obtain
\begin{eqnarray}
&&\sum_{\pi\in M({\cal V}_s')} \left(\prod_{h\in S_1} w^2_{\pi(h)} \right)\left(\prod_{h=h^{(s)},\dots,h^{(k-c)}}w_{\pi(h)} \right) \Upsilon_s(\pi)\nonumber \\
&&= \sum_{\pi'\in M({\cal V}_{s+1}')} \sum_{\substack{\pi\in M({\cal V}_s') \\ \text{s.t. } \pi \text{ extends } \pi'}}
 \left(\prod_{h\in S_1} w^2_{\pi(h)} \right) \left(\prod_{h=h^{(s+1)},\dots,h^{(k-c)}}w_{\pi(h)} \right)  \Upsilon_{s+1}(\pi) \left(w_{\pi(h^{(s)})}\prod_{v\in V_s} \Eb{\left|Y_{\pi(v)}\right|}\right) \nonumber \\
&&= \sum_{\pi'\in M({\cal V}_{s+1}')}\left(\prod_{h\in S_1} w^2_{\pi'(h)} \right)\left(\prod_{h=h^{(s+1)},\dots,h^{(k-c)}}w_{\pi'(h)} \right)\Upsilon_{s+1}(\pi') \left[  \sum_{\substack{\pi\in M({\cal V}_s') \\ \text{s.t. } \pi \text{ extends } \pi'}}
\left(w_{\pi(h^{(s)})}\prod_{v\in V_s}\Eb{\left|Y_{\pi(v)}\right|} \right)\right]
\nonumber
\end{eqnarray}
where we say that $\pi$ \emph{extends} $\pi'$ if $\pi(v)=\pi'(v)$ for every $v$ in the domain of $\pi'$.

 We now group the sum over $\pi$ by the value of $\pi(h^{(s)}) \equiv h \in {\cal H}$. Note that for any fixed mapping $\pi'\in M({\cal V}_{s+1}')$ there are exactly  $|V_s|!$ possible mappings $\pi \in M({\cal V}_s')$ that extend $\pi'$ and map the vertex labels of hyperedge $h^{(s)}\in G'$ into vertex labels of the hyperedge $h\in {\cal H}$.
  Let $S'=\{ \pi'(v):     v\in  {\cal V}(h^{(s)}) \setminus V_s\}$, which is the portion of $\pi({\cal V}(h^{(s)}))$ that is fixed by $\pi'$. Then
\begin{eqnarray*}
\sum_{\substack{\pi\in M({\cal V}_s') \\ \text{s.t. } \pi \text{ extends } \pi'}} w_{\pi(h^{(s)})}\prod_{v\in V_s} \Eb{\left|Y_{\pi(v)}\right|} &=& |V_s|! \sum_{h \in {\cal H}:{\cal V}(h) \supseteq S'} w_{h} \prod_{u\in \ve{h} \setminus S'} \Eb{\left|Y_{u}\right|}\\
&\le& |V_s|! \max_{S:|S|=q-|V_s|}\left\{\sum_{h \in {\cal H}:{\cal V}(h) \supseteq S} w_h\prod_{u\in \ve{h}\setminus S}\Eb{\left|Y_{u}\right|}\right\} \\
&=& |V_s|! \mu_{q-|V_s|}(g,Y).
\end{eqnarray*}

For $s=k-c+1,\dots,k$, ${\cal H}_s'\subseteq S_1$, $|V_s|=q$ and we have a similar argument (note that $|M({\cal V}_{k+1}')|=|M(\emptyset)|=1$)
\begin{eqnarray}
&&\sum_{\pi\in M({\cal V}_s')} \left(\prod_{h\in {\cal H}_s'}w^2_{\pi(h)} \right)\left(\prod_{u\in  {\cal V}_s'}\Eb{X_{\pi(u)}^2}\right) \nonumber \\
&&= \sum_{\pi'\in M({\cal V}_{s+1}')} \sum_{\substack{\pi\in M({\cal V}_s') \\ \text{s.t. } \pi \text{ extends } \pi'}}
\left(\prod_{h\in {\cal H}_{s+1}'}w^2_{\pi(h)} \right)\left(\prod_{u\in  {\cal V}_{s+1}'}\Eb{X_{\pi(u)}^2}\right)
\left(w^2_{\pi(h^{(s)})}\prod_{u\in V_s} \Eb{X_{\pi(u)}^2}\right) \nonumber \\
&&= \sum_{\pi'\in M({\cal V}_{s+1}')}\left[  \left(\prod_{h\in {\cal H}_{s+1}'}w^2_{\pi'(h)} \right)\left(\prod_{u\in  {\cal V}_{s+1}'}\Eb{X_{\pi'(u)}^2}\right)
 \sum_{\substack{\pi\in M({\cal V}_s') \\ \text{s.t. } \pi \text{ extends } \pi'}}
\left(w^2_{\pi(h^{(s)})}\prod_{u\in V_s} \Eb{X_{\pi(u)}^2}\right)\right]\nonumber \\
&&= |V_s|! \cdot \sum_{\pi'\in M({\cal V}_{s+1}')}\left[  \left(\prod_{h\in {\cal H}_{s+1}'}w^2_{\pi'(h)} \right)\left(\prod_{u\in  {\cal V}_{s+1}'}\Eb{X_{\pi'(u)}^2}\right)\right]
\cdot \left(\sum_{h \in {\cal H}} w^2_{h} \prod_{u\in \ve{h}} \Eb{ X_{u}^2} \right) \nonumber \\
&&= |V_s|!\cdot \sum_{\pi'\in M({\cal V}_{s+1}')}\left[  \left(\prod_{h\in {\cal H}_{s+1}'}w^2_{\pi'(h)} \right)\left(\prod_{u\in  {\cal V}_{s+1}'}\Eb{X_{\pi'(u)}^2}\right)\right]\cdot Var[g(X)], \nonumber
\end{eqnarray}
where in the last equality we used Lemma \ref{variance}.

In the end we bound the first term of (\ref{transformed}) as follows:
\begin{eqnarray} \label{estimate}
&&2^{\ell-2qc-1}L^{qk-qc-\ell} \left( \prod_{u\in {\cal V}(G')} d_u!\right)\sum_{\pi\in M([\ell])} \left(\prod_{h\in S_1}w^2_{\pi(h)}\right)\cdot \left( \prod_{h=h^{(c+1)},\dots,h^{(k-c)}}w_{\pi(h)} \right)\Upsilon_1(\pi) \nonumber \\
&\le& 2^{\ell-2qc-1}L^{qk-qc-\ell}\left(\prod_{v\in {\cal V}(G')} d_v!\right)\left(\prod_{s=k-c+1}^{k} |V_s|!Var[g(X)]\right)\prod_{s=c+1}^{k-c} \left(|V_s|!  \mu_{q-|V_s|} \right)\nonumber\\
&=& 2^{\ell-2q\sigma_0-1}L^{qk-q\sigma_0-\ell}\left(\prod_{v\in {\cal V}(G')} d_v!\right)\left(\prod_{s=1}^k |V_s|!  \right)Var[g(X)]^{\sigma_0}\prod_{t=1}^{q}\mu_t^{\sigma_t}\nonumber\\
&\le& 2^{\ell-2q\sigma_0-1}L^{qk-q\sigma_0-\ell}\left(\prod_{v\in {\cal V}(G')} d_v!\right) Var[g(X)]^{\sigma_0}q^\ell \cdot \prod_{t=1}^{q}\mu_t^{\sigma_t}
\end{eqnarray}
where $\sigma_0=c$, $\sigma_t$ for $t\ge 1$ is the number of indices $s=c+1,\dots,k-c$ with $q-|V_s|=t$ and  $\mu_t = \mu_t(w,Y)$. In the last inequality we used the fact that $\sum_{s=1}^k|V_s|=\ell$ and $|V_s|\le q$.
The quantities $\sigma_t$ must satisfy the equalities $\sum_{t=0}^{q} (q-t)\sigma_t = \ell$ and $2\sigma_0+\sum_{t=1}^q\sigma_t=k$.

Using analogous argument for the canonical ordering ${\tilde h}^{(1)},\dots,{\tilde h}^{(k)}$ we show
\begin{eqnarray} \label{estimate1}
&&2^{\ell-2qc-1}L^{qk-qc-\ell}  \left(\prod_{u\in {\cal V}(G')} d_u!\right)\sum_{\pi\in M([\ell])} \left(\prod_{h\in S_2}w^2_{\pi(h)} \right)\cdot \left( \prod_{h=\tilde h^{(c+1)},\dots,\tilde h^{(k-c)}}w_{\pi(h)} \right){\tilde \Upsilon}_1(\pi)  \nonumber\\
&&\le 2^{\ell-2q\sigma'_0-1}L^{qk-q\sigma'_0-\ell}\left(\prod_{v\in {\cal V}(G')}d_v!\right) Var[g(X)]^{\sigma'_0}q^\ell \cdot \prod_{t=1}^{q}\mu_t^{\sigma'_t},
\end{eqnarray}
where $\sigma_0'=c$ and $\sigma_1',\dots,\sigma_q'$ is a different collection of powers satisfying conditions of the Lemma.

Combining the inequalities (\ref{transformed}), (\ref{estimate})  and  (\ref{estimate1}) we derive
\begin{eqnarray}\label{lastestimate}
&&\sum_{\pi\in M([\ell])} \left(\prod_{h\in {\cal H}(G')}w_{\pi(h)} \right)\left(\prod_{u\in {\cal V}(G')}\Eb{\left|X_{\pi(u)}\right|^{d_u(G')} }\right)\nonumber \\
&&\le\max_{{\bar \sigma}} \left\{ 2^{\ell-2q\sigma_0}L^{qk-q\sigma_0-\ell}\left(\prod_{v\in {\cal V}(G')} d_v!\right) Var[g(X)]^{\sigma_0}q^\ell \cdot \prod_{t=1}^{q}\mu_t^{\sigma_t}\right\}
\end{eqnarray}
where the maximum is over all non-negative integers $\sigma_0,\sigma_1,\dots,\sigma_q$ satisfying $2\sigma_0+\sum_{t=1}^q\sigma_t=k$, $ \sigma_0=c$ and $\ell = \sum_{t=0}^{q}(q-t)\sigma_t$.
\subsection{Using the Counting Lemma}\label{sec:useCount}
We decompose ${\cal S}_2(\ell)$ as ${\cal S}_2(\ell) = \bigcup_{c,\bar{d} \ge \bar{2}}{\cal S}(\ell, c, \bar{d})$ where $\bar{2}$ is a vector of $\ell$ twos and ${\cal S}(\ell,c,\bar{d})$ is the number of vertex and hyperedge labeled  hypergraphs with vertex set $[\ell]$ and $k$ labeled hyperedges such that each hyperedge has cardinality $q$, the number of connected components is $c$, the degree vector is $\bar{d}$. (Note that ${\cal S}(\ell, c, \bar{d})$ depends on $k$ and $q$ as well.)
Let ${\bar \sigma}=(\sigma_0,\dots,\sigma_q)$. Combining, (\ref{inequality3}) and (\ref{lastestimate}) we obtain
\begin{eqnarray}
&&\left|\E\left[g(X)^k\right]\right|
 \le  \sum_{\ell=q}^{kq/2}\frac{1}{\ell!}\sum_{c=1}^{\ell/q}\sum_{\bar{d}\ge \bar{2}}\left( \sum_{G'\in {\cal S}(\ell,c,\bar{d})} \max_{{\bar \sigma}} \left\{2^{\ell-2q\sigma_0} L^{qk-q\sigma_0-\ell} Var[g(X)]^{\sigma_0} \left(\prod_{t=1}^{q}\mu_t^{\sigma_t} \right) q^{\ell} \prod_{v \in [\ell]} d_v!\right\}\right) \nonumber \\
&& \le  \max_{\ell,c,\bar{d}\ge \bar{2},{\bar \sigma}}\left\{ \frac{kq}{2}\cdot \frac{1}{\ell!} \cdot\frac{\ell}{q}\cdot 2^{q k + \ell} \cdot  |{\cal S}(\ell,c,\bar{d})| \cdot 2^{\ell-2q\sigma_0} L^{qk-q\sigma_0-\ell} Var[g(X)]^{\sigma_0} \left(\prod_{t=1}^{q}\mu_t^{\sigma_t} \right)q^{\ell} \prod_v d_v! \right\} \nonumber \\
&& \le   \max_{\ell,c,\bar{d}\ge \bar{2},{\bar \sigma}} \Bigg\{\frac{k\ell}{2\cdot \ell !}\cdot 2^{q(k-2\sigma_0) + 2\ell} \cdot L^{qk-q\sigma_0-\ell} Var[g(X)]^{\sigma_0}  \cdot q^{\ell} \cdot \underbrace{R_0^{qk}\cdot k^{qk-(q-1)c}}_{\substack{|{\cal S}(\ell,c,\bar{d})|\prod_v d_v! ~\le~ \text{this}\\ \text{by counting Lemma \ref{MainCount}}}}\cdot \left(\prod_{t=1}^{q}\mu_t^{\sigma_t} \right) \Bigg\}\nonumber
\end{eqnarray}
where the sum is over $\bar d \ge 2$ with $\sum_{v \in [\ell]} d_v = qk$ and the maximum over $\bar \sigma$ has the same constraints as in (\ref{lastestimate}). The maximums over $\ell$, $c$, and $\bar d$ are over the same sets that those quantities were previously summed over.
The second inequality follows from the fact that the total number of feasible degree vectors $\bar{d}$ is at most $2^{q k+\ell}$ ($q k$ is the sum of all the degrees and we need to compute the total number of partitions of the array with $q k$ entries into $\ell$ possible groups of consecutive entries which is ${q k+\ell-1 \choose \ell-1}$).

We now substitute $\sigma_0$ for $c$, and remove the unreferenced variables $c$ and $\bar{d}$  from the maximum.  We also remove $\ell$ from the maximum since it is completely defined by the vector ${\bar \sigma}$. We continue
\begin{eqnarray}
\left|\E\left[g(X)^k\right]\right|
& \le &  \max_{{\bar \sigma}} \left\{\frac{k\ell}{\ell !}\cdot L^{qk-q\sigma_0-\ell} Var[g(X)]^{\sigma_0}  \cdot q^{\ell} \cdot R_1^{qk} \cdot k^{qk-(q-1)\sigma_0 }\cdot \left(\prod_{t=1}^{q}\mu_t^{\sigma_t} \right) \right\}\nonumber \\
& \le &  \max_{{\bar \sigma} } \left\{R_2^{qk} \cdot \left(\frac{q}{\ell}\right)^{\ell}\cdot L^{qk-q\sigma_0-\ell} Var[g(X)]^{\sigma_0} \cdot k^{qk-(q-1)\sigma_0 }\cdot \left(\prod_{t=1}^{q}\mu_t^{\sigma_t} \right) \right\}\nonumber \\
& = &  \max_{{\bar \sigma} } \left\{R_2^{qk} \cdot \left(\frac{q k}{\ell}\right)^{\ell} \cdot  L^{qk-q\sigma_0-\ell} Var[g(X)]^{\sigma_0} \cdot k^{qk-(q-1)\sigma_0-\ell}\cdot \left(\prod_{t=1}^{q}\mu_t^{\sigma_t} \right) \right\}\nonumber \\
& \le &  \max_{{\bar \sigma}} \left\{R_3^{qk}  L^{qk-q\sigma_0-\ell} Var[g(X)]^{\sigma_0} \cdot k^{qk-(q-1)\sigma_0-\ell}\cdot \left(\prod_{t=1}^{q}\mu_t^{\sigma_t} \right) \right\}\label{UsingCount}
\end{eqnarray}
where $R_0<R_1<R_2<R_3$ are some absolute constants, the second inequality uses the fact  that $\ell!\ge (\ell/e)^\ell$, and the last inequality is implied by the fact that
\[
\left(\frac{qk}{\ell}\right)^{\ell} \le \max_{x>0} \left(\frac{kq}{x}\right)^{x} = e^{qk/e}.
\]
Inequality (\ref{UsingCount}) is precisely the inequality (\ref{inequality44}) that we needed to prove.
\end{proof}

\section{Intermediate moment lemma}\label{sec:intermediate}

\begin{lemma}[Intermediate Moment Lemma]\label{lem:moment}
We are given $n$ independent central moment bounded random variables $Y=(Y_1,\dots, Y_n)$ with the same parameter $L>0$  and a general polynomial $g(x)$ with nonnegative coefficients such that every monomial (or hyperedge) $h\in {\cal  H}$ has power exactly $q$. Let $X_v=Y_v-\Eb{Y_v}$ then
 \begin{eqnarray}\label{inequality4}
\left|\E\left[g(X)^k\right]\right| & \le & \max \left\{ \left(\sqrt{kR_3^q  Var[g(X)]}\right)^{k}, \max_{t\in [q]}(k^{t} R_3^q L^t \mu_t(g,Y))^{k}\right\}.
\end{eqnarray}
where $R_3\ge 1$ is some absolute constant and $X$ is the vector of centered random variables defined in the Lemma \ref{lem:momentPrelim}.
\end{lemma}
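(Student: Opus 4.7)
The plan is to derive the Intermediate Moment Lemma directly from the Initial Moment Lemma (Lemma~\ref{lem:momentPrelim}) by recognizing the bound there as a weighted geometric mean and applying the elementary fact that a weighted geometric mean of nonnegative numbers is at most their maximum. Let $R$ denote the absolute constant called $R_3$ in Lemma~\ref{lem:momentPrelim}; I will choose the $R_3$ of Lemma~\ref{lem:moment} to be any constant with $R_3 \ge R^2$.

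\emph{Step 1 (reparametrize the exponents).} For a feasible $\bar\sigma=(\sigma_0,\dots,\sigma_q)$, set $\beta_0 = 2\sigma_0/k$ and $\beta_t = \sigma_t/k$ for $t=1,\dots,q$. The constraint $2\sigma_0 + \sum_{t\ge 1}\sigma_t = k$ becomes $\beta_0 + \sum_{t=1}^q\beta_t = 1$ with all $\beta_i\ge 0$. From $\ell = q\sigma_0+\sum_{t=1}^q(q-t)\sigma_t$ one directly computes $(qk-q\sigma_0-\ell)/k = \sum_{t=1}^q t\beta_t$ and $(qk-(q-1)\sigma_0-\ell)/k = \beta_0/2 + \sum_{t=1}^q t\beta_t$. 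Substituting these identities into the bound from Lemma~\ref{lem:momentPrelim} and grouping factors yields
\[
\bigl|\E[g(X)^k]\bigr| \;\le\; \max_{\bar\sigma}\, R^{qk}\cdot \bigl(\sqrt{k\cdot Var[g(X)]}\bigr)^{k\beta_0}\cdot \prod_{t=1}^q \bigl(k^t L^t\mu_t(g,Y)\bigr)^{k\beta_t}.
\]

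\emph{Step 2 (weighted geometric mean versus max).} Because $\beta_0+\sum_{t=1}^q\beta_t = 1$ with all $\beta_i\ge 0$, the product after $R^{qk}$ is the $k$-th power of a weighted geometric mean of the base quantities $\sqrt{k\cdot Var[g(X)]}$ and $k^t L^t\mu_t(g,Y)$, and hence at most the $k$-th power of their maximum. This gives a $\bar\sigma$-free upper bound
\[
\bigl|\E[g(X)^k]\bigr| \;\le\; R^{qk}\cdot \max\!\Bigl\{\sqrt{k\cdot Var[g(X)]},\; \max_{t\in [q]} k^t L^t\mu_t(g,Y)\Bigr\}^k.
\]

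\emph{Step 3 (absorb the constant into $R_3$).} Finally, distribute $R^{qk}=(R^q)^k$ inside the max. For the variance term, $R^q\sqrt{k\cdot Var[g(X)]}\le\sqrt{k\cdot R_3^q\cdot Var[g(X)]}$ iff $R_3\ge R^2$; for each $\mu_t$ term, $R^q k^t L^t\mu_t\le k^t R_3^q L^t\mu_t$ iff $R_3\ge R$. Both conditions are met by taking $R_3\ge R^2$, and one recovers exactly the bound stated in Lemma~\ref{lem:moment}. The whole argument is essentially mechanical; the only subtlety worth flagging is the asymmetric way the new constant enters the two kinds of terms (as $R_3^{q/2}$ in the variance term versus $R_3^q$ in the $\mu_t$ terms), which is what dictates the quadratic blow-up $R_3\ge R^2$ rather than simply $R_3\ge R$.
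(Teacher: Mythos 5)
Your proof is correct and follows essentially the same route as the paper: derive the identity $\sum_{t=1}^q t\sigma_t = qk - q\sigma_0 - \ell$ (hence $\sigma_0+\sum_t t\sigma_t = qk-(q-1)\sigma_0-\ell$), regroup the bound from Lemma~\ref{lem:momentPrelim} into a product of base quantities whose exponents sum to $k$, and bound that product by the $k$-th power of the maximum of the bases. Your Step~3 is in fact slightly more careful than the paper, which silently absorbs $R_3^{qk}$ into $(kR_3^qVar[g(X)])^{\sigma_0}\prod_t(k^tR_3^qL^t\mu_t)^{\sigma_t}$ even though the variance factor really requires $R_3^{2q}$ there; your explicit choice $R_3\ge R^2$ is the right way to fix this harmless renaming of the constant.
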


\begin{proof}
First we note that $2\sigma_0+\sum_{t=1}^q\sigma_t=k$ and $\sum_{t=0}^{q}(q-t)\sigma_t =\ell$ imply,
\begin{eqnarray*}
\sum_{t=1}^{q}t\sigma_t&=&qk-q(2\sigma_0+\sum_{t=1}^q\sigma_t)+\sum_{t=1}^{q}t\sigma_t\\
&=&qk-q\sigma_0-\sum_{t=0}^{q}(q-t)\sigma_t\\
&=&qk-q\sigma_0-\ell.
\end{eqnarray*}
Therefore, $\sigma_0+\sum_{t=1}^{q}t\sigma_t=qk-(q-1)\sigma_0-\ell$.
Combining these facts with Lemma \ref{lem:momentPrelim}, we derive
\begin{eqnarray*}
\left|\E\left[g(X)^k\right]\right|&\le &\max_{{\bar \sigma}} \left\{R_3^{qk} L^{qk-q\sigma_0-\ell}Var[g(X)]^{\sigma_0} \cdot k^{qk-(q-1)\sigma_0-\ell}\cdot \left(\prod_{t=1}^{q}\mu_t^{\sigma_t} \right) \right\}\nonumber \\
&\le&     \max_{{\bar \sigma}} \left\{(k R_3^q Var[g(X)])^{\sigma_0}\cdot \prod_{t=1}^{q}(k^{t}R_3^q L^t\mu_t)^{\sigma_t}\right\} \nonumber \\
 & = &  \max_{{\bar \sigma}} \left\{ \left(\sqrt{k R_3^q Var[g(X)]}\right)^{2\sigma_0}\cdot \prod_{t=1}^{q}(k^{t}R_3^q L^t\mu_t)^{\sigma_t}\right\}\\
&\le&  \max \left\{ \left(\sqrt{k R_3^q Var[g(X)]}\right)^{k}, \max_{t\in [q]}\left(k^{t} R_3^q L^t \mu_t\right)^{k}\right\},
\end{eqnarray*}
where the last inequality is based on the fact that $2\sigma_0+\sum_{t=1}^q\sigma_t=k$.
\end{proof}

\section{Technical Lemmas}\label{technical}
\begin{lemma}\label{variance}
We are given $n$ independent random variables $X_1,\dots, X_n$ such that $\Eb{X_i}=0$ for all $i\in [n]$. We are also given a multilinear polynomial $f(x)= \sum_{h\in {\cal  H}(H) } w_h\prod_{v\in {\cal V}(h)}x_v$ with $|{\cal V}(h)|\ge 1$ for any $h\in {\cal  H}(H)$. Then
$$Var[f(X)]= \sum_{h\in {\cal  H}(H) } w_h^2\prod_{v\in {\cal V}(h)}\Eb{X^2_v}.$$
\end{lemma}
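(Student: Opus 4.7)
The plan is to compute $Var[f(X)] = \E[f(X)^2] - \E[f(X)]^2$ directly by expanding $f(X)^2$ and exploiting independence together with the assumption $\E[X_v] = 0$. The multilinearity of $f$ is crucial: each monomial $\prod_{v \in \mathcal{V}(h)} X_v$ contains every variable to the first power, so the parity of appearances in a product of two monomials is determined purely by the symmetric difference of the two vertex sets.

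First I would observe that $\E[f(X)] = 0$: for each hyperedge $h$ with $|\mathcal{V}(h)| \geq 1$, pick any $v \in \mathcal{V}(h)$; by independence, $\E\!\left[\prod_{u \in \mathcal{V}(h)} X_u\right] = \E[X_v] \cdot \E\!\left[\prod_{u \in \mathcal{V}(h)\setminus\{v\}} X_u\right] = 0$. Hence $Var[f(X)] = \E[f(X)^2]$.

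Next I would expand
\[
\E[f(X)^2] = \sum_{h, h' \in \mathcal{H}(H)} w_h w_{h'} \, \E\!\left[\prod_{v \in \mathcal{V}(h)} X_v \prod_{u \in \mathcal{V}(h')} X_u\right].
\]
For a fixed pair $(h, h')$, each variable indexed by $v \in \mathcal{V}(h) \cap \mathcal{V}(h')$ appears with multiplicity exactly $2$ in the product (by multilinearity), while each variable indexed by $v \in \mathcal{V}(h) \triangle \mathcal{V}(h')$ appears with multiplicity exactly $1$. By independence the expectation factorizes into a product indexed by the distinct variables. If $h \neq h'$ then the symmetric difference is nonempty, so at least one factor is $\E[X_v] = 0$, killing the whole term. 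If $h = h'$, then every variable in $\mathcal{V}(h)$ contributes a factor $\E[X_v^2]$.

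Therefore only the diagonal pairs $h = h'$ survive, and
\[
Var[f(X)] = \E[f(X)^2] = \sum_{h \in \mathcal{H}(H)} w_h^2 \prod_{v \in \mathcal{V}(h)} \E[X_v^2],
\]
which is the desired identity. The argument is essentially an orthogonality statement: the monomials $\prod_{v \in \mathcal{V}(h)} X_v$ are mutually orthogonal in $L^2$ with respect to the product measure whenever the $X_v$ have mean zero, and this orthogonality is the only non-trivial ingredient. There is no real obstacle to overcome; the main thing to be careful about is ensuring that multilinearity (not merely degree $q$) is used so that repeated indices within a single hyperedge do not complicate the parity argument.
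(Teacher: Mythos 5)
Your proof is correct and follows essentially the same route as the paper: show $\Eb{f(X)}=0$, expand $\Eb{f(X)^2}$ as a double sum over pairs of hyperedges, factor the expectation by independence into $\prod_{v\in h\cap h'}\Eb{X_v^2}$ times $\prod_{v\in (h\setminus h')\cup(h'\setminus h)}\Eb{X_v}$, and observe that off-diagonal terms vanish because the symmetric difference is nonempty. No gaps.
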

\begin{proof}
Let ${\cal H} = {\cal H}(H)$.
Clearly $\Eb{f(X)}=0$ hence
\begin{align*}
Var[f(X)] & = \Eb{(f(X) - \Eb{f(X)})^2} = \Eb{f(X)^2} = \Eb{\left(\sum_{h\in {\cal H}} w_h \prod_{v \in h} X_v\right)^2} \\
& = \sum_{h\in {\cal H}} \sum_{h'\in {\cal H}} w_h w_{h'} \Eb{\left(\prod_{v \in h} X_v\right)\left(\prod_{v \in h'} X_v\right) } \\
& = \sum_{h\in {\cal H}} \sum_{h'\in {\cal H}} w_h w_{h'} \left(\prod_{v \in h \cap h'} \Eb{X_v^2}\right) \prod_{v \in (h \setminus h') \cup (h' \setminus h)} \Eb{X_v}\\
& = \sum_{h\in {\cal  H}} w_h^2\prod_{v\in h}\Eb{X^2_v}
\end{align*}
where the last equality follows because $\Eb{X_v}=0$ by assumption.
\end{proof}

\begin{lemma}\label{orthogonal}
We are given $n$ independent random variables $X_1,\dots, X_n$ such that $\Eb{X_i}=0$ for all $i\in [n]$. We are also given  two multilinear polynomials $g_1(x)= \sum_{h\in {\cal  H}(H) } w_h\prod_{v\in h}x_v$
and $g_2(x)= \sum_{h\in {\cal  H}(H) } w'_h\prod_{v\in h}x_v$ such that  $w_hw'_h=0$ for any hyperedge $h$. Then
$\Eb{g_1(X)g_2(X)}=0.$
\end{lemma}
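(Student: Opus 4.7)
The proof will be a direct computation that closely parallels the proof of Lemma \ref{variance} just given. The plan is to expand $\Eb{g_1(X)\,g_2(X)}$ into a double sum indexed by pairs of hyperedges $(h,h')$, apply independence and the mean-zero assumption to collapse each term, and then invoke the hypothesis $w_h w'_h = 0$ to kill what remains.

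More concretely, I would first write
\[
\Eb{g_1(X)\,g_2(X)} \;=\; \sum_{h\in{\cal H}}\sum_{h'\in{\cal H}} w_h\, w'_{h'} \,\Eb{\Big(\prod_{v\in h}X_v\Big)\Big(\prod_{v\in h'}X_v\Big)}.
\]
Because the polynomials are multilinear, the inner product has each variable appearing to power at most $2$: vertices in $h\cap h'$ contribute a factor $X_v^2$ while vertices in the symmetric difference $(h\setminus h')\cup(h'\setminus h)$ contribute a single factor $X_v$. Since the $X_v$ are independent, the expectation factors as
\[
\Big(\prod_{v\in h\cap h'}\Eb{X_v^2}\Big)\prod_{v\in (h\setminus h')\cup(h'\setminus h)} \Eb{X_v}.
\]
Using $\Eb{X_v}=0$, every term in which the symmetric difference is nonempty vanishes, so only the diagonal $h=h'$ survives.

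For the diagonal contribution, the coefficient is $w_h w'_h$, and this is exactly where the hypothesis $w_h w'_h=0$ enters: each surviving term is zero. Hence the entire sum vanishes. I do not expect any real obstacle here; the only thing to be a bit careful about is making clear that multilinearity is what forces each $X_v$ to appear to power at most one within a single factor $\prod_{v\in h}X_v$, so that the off-diagonal pairs must have at least one lonely $X_v$ whose mean is zero. Once that is said, the calculation is a one-line consequence of independence and the coefficient hypothesis.
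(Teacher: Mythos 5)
Your proof is correct and follows exactly the same route as the paper's: expand into a double sum over pairs of hyperedges, factor the expectation by independence so off-diagonal pairs vanish via $\Eb{X_v}=0$ on the symmetric difference, and kill the diagonal terms with $w_h w'_h=0$. Nothing is missing.
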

\begin{proof}
The proof is similar to the proof of Lemma \ref{variance}:
\begin{align*}
\Eb{g_1(X)g_2(X)} & = \sum_{h\in {\cal  H}(H)} \sum_{h' \in {\cal  H}(H)} w_h w'_{h'} \Eb{\left(\prod_{v\in h} X_v\right)\left(\prod_{v\in h'} X_v\right)} \\
& = \sum_{h\in {\cal  H}(H)} \sum_{h' \in {\cal  H}(H)} w_h w'_{h'}\left(\prod_{v\in h \cap h'}\Eb{X_v^2}\right)\left(\prod_{v\in (h \setminus h') \cup (h' \setminus h)}\Eb{X_v}\right) \\
& = 0
\end{align*}
where the final equality follows because each $h,h'$ term either has $w_h w'_{h'} = 0$ or else $h \ne h'$ and hence at least one $\Eb{X_v}=0$ factor.
\end{proof}

\begin{lemma}[Ordering Lemma]\label{lem:ordering}
We are given a hypergraph $G'=({\cal V},{\cal H})$ with $c$ connected components and degree of each vertex $\ge 2$. We can define two disjoint sets of hyperedges  $S_1$ and $S_2$ each containing exactly one hyperedge per connected component of $G'$, i.e. $|S_1|=|S_2|=c$, such that there exist two canonical orderings  $h^{(1)},\ldots,h^{(k)}$ and ${\tilde h}^{(1)},\ldots,{\tilde h}^{(k)}$ of the hyperedges ${\cal H}$ with the following properties:
\begin{enumerate}
\item $S_2=\{h^{(1)},\ldots,h^{(c)}\}=\{{\tilde h}^{(k-c+1)},\ldots,{\tilde h}^{(k)}\}$ and  $S_1=\{h^{(k-c+1)},\ldots,h^{(k)}\}=\{{\tilde h}^{(1)},\ldots,{\tilde h}^{(c)}\}$, i.e. the hyperedges from $S_2$ appear first in the canonical ordering $h^{(1)},\ldots,h^{(k)}$ and last in the canonical ordering ${\tilde h}^{(1)},\ldots,{\tilde h}^{(k)}$, while  the hyperedges from $S_1$ appear last in the canonical ordering $h^{(1)},\ldots,h^{(k)}$ and first in the canonical ordering ${\tilde h}^{(1)},\ldots,{\tilde h}^{(k)}$;
\item for any  $s= 1,\ldots,k-c$ the hypergraph $G_s$ induced by the hyperedges $h^{(s)},\ldots,h^{(k)}$ has exactly  $c$ connected components;
\item  Analogously, for any $s=1,\ldots,k-c$ the hypergraph ${\tilde G}_s$ induced by the hyperedges ${\tilde h}^{(s)},\ldots,{\tilde h}^{(k)}$ has exactly $c$ connected components.
\end{enumerate}
\end{lemma}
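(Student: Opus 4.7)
The plan is to reduce the lemma to a construction inside each connected component of $G'$ and then merge the per-component orderings. For a fixed component $C_i$ with hyperedge set $\mathcal{H}_i$ of size $k_i$, I want to find two distinct hyperedges $e_1^{(i)}, e_2^{(i)} \in \mathcal{H}_i$ and an ordering $b_1^{(i)}, \ldots, b_{k_i}^{(i)}$ of $\mathcal{H}_i$ with $b_1^{(i)} = e_2^{(i)}$, $b_{k_i}^{(i)} = e_1^{(i)}$, such that for every $j$ the suffix $\{b_j^{(i)}, \ldots, b_{k_i}^{(i)}\}$ induces a connected sub-hypergraph of $C_i$; symmetrically for the second ordering with $e_1^{(i)}$ and $e_2^{(i)}$ interchanged. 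Setting $S_2 = \{e_2^{(i)}\}_{i=1}^{c}$ and $S_1 = \{e_1^{(i)}\}_{i=1}^{c}$, the global first canonical ordering is built by placing all edges of $S_2$ first (in any order), then interleaving the middle portions of the per-component local orderings in any way that respects the internal order of each, and finally placing all edges of $S_1$ last. Property~2 then follows because, for every $s \le k-c$, the edges of any $C_i$ still present in $G_s$ form a suffix of the local ordering (hence a connected sub-hypergraph) and contain $e_1^{(i)}$ (hence a non-empty one); the $c$ components have pairwise disjoint vertex sets, so $G_s$ contains exactly $c$ connected components.

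The main obstacle is showing that inside each $C_i$ one can find two distinct \emph{non-cut} hyperedges, by which I mean hyperedges $h$ such that $C_i \setminus \{h\}$ remains connected. First, $k_i \ge 2$: otherwise the unique hyperedge of $C_i$ contains only vertices whose $G'$-degree is $1$, contradicting the assumption that every vertex has degree at least $2$. To produce two non-cut hyperedges, introduce the \emph{intersection graph} $\Gamma_i$ whose vertex set is $\mathcal{H}_i$ and in which two hyperedges are adjacent iff they share at least one vertex in $G'$. Because $C_i \setminus \{h\}$ is connected as a hypergraph iff $\Gamma_i - h$ is connected as a graph, the non-cut hyperedges of $C_i$ are precisely the non-cut vertices of $\Gamma_i$. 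Since $\Gamma_i$ is a connected graph on $\ge 2$ vertices, any spanning tree of $\Gamma_i$ has at least two leaves, and every leaf of a spanning tree is a non-cut vertex of the ambient graph; this produces two distinct non-cut hyperedges to serve as $e_1^{(i)}$ and $e_2^{(i)}$.

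Once $e_1^{(i)}$ and $e_2^{(i)}$ are fixed, the local ordering for the first canonical ordering is obtained by running BFS in the connected hypergraph $C_i \setminus \{e_2^{(i)}\}$ starting from $e_1^{(i)}$, producing a sequence $e_1^{(i)}, f_1, \ldots, f_{k_i - 2}$ in which every prefix is connected, and then setting $b_1^{(i)} = e_2^{(i)}$, $b_{k_i}^{(i)} = e_1^{(i)}$, and $b_{k_i - j}^{(i)} = f_j$ for $1 \le j \le k_i - 2$. Each suffix of $b_1^{(i)}, \ldots, b_{k_i}^{(i)}$ then equals either the full $C_i$ or a BFS prefix inside $C_i \setminus \{e_2^{(i)}\}$, both of which are connected. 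The second canonical ordering is built symmetrically using BFS in $C_i \setminus \{e_1^{(i)}\}$ starting from $e_2^{(i)}$. Once the intersection-graph argument for the existence of two non-cut hyperedges is in place, the BFS-reversal and the interleaving of the per-component orderings are routine.
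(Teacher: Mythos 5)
Your argument is correct and rests on the same core ideas as the paper's proof: pass to the line (what you call intersection) graph of the hypergraph, and use the fact that any spanning tree of a connected graph on at least two vertices has at least two leaves, each of which is a non-cut vertex. The paper then constructs the canonical ordering directly by iteratively peeling leaves off a spanning forest of the global line graph, with the constraint that the first $c$ picks come from $S_2$, the middle $k-2c$ picks avoid $S_1\cup S_2$, and the last $c$ picks come from $S_1$; each removal preserves the number of forest components and keeps every component populated with its $S_1$-element. You instead first fix two non-cut hyperedges $e_1^{(i)},e_2^{(i)}$ in each component $C_i$, reverse a BFS ordering of $\Gamma_i - e_2^{(i)}$ rooted at $e_1^{(i)}$, prepend $e_2^{(i)}$, and then interleave the per-component middle portions. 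These are equivalent in substance: a reversed BFS ordering is one concrete way of realizing the paper's iterative leaf-peeling, and your per-component-then-interleave decomposition is a routine bookkeeping step that the paper handles implicitly by working with the whole spanning forest at once. The explicit observations you add — that $k_i\geq 2$ follows from the degree-$\geq 2$ hypothesis, and that hypergraph connectivity of a sub-collection of hyperedges is exactly connectivity of the corresponding induced subgraph of the line graph — are both correct and both implicit in the paper's argument. So the difference is one of presentation, not of mathematical approach.
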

\begin{proof}
Let ${\cal L}$ be the line graph of $G'$, i.e.\ an undirected graph with one vertex for each of the $k$ hyperedges of $G'$ and an edge connecting every pair of vertices that correspond to hyperedges with intersecting vertex sets. Pick an arbitrary spanning forest ${\cal F}$ of ${\cal L}$. Pick two leaves arbitrarily from each connected component of ${\cal F}$ and arbitrarily put one from each component in $S_1$ and the others in $S_2$. The existence of at least two leaves in each component follows because all vertices of $G'$ have degrees at least 2 and hence each connected component has at least two hyperedges. It is easy to see that any tree with at least two vertices has at least two leaves.\footnote{Indeed root each tree arbitrarily; if the root has degree two or more pick arbitrary leaf descendents (in the rooted sense) of two neighbors of the root; otherwise pick the root and an arbitrary leaf descendent of the root.}

We show the construction of $h^{(1)},\ldots,h^{(k)}$ only; the construction of ${\tilde h}^{(1)},\ldots,{\tilde h}^{(k)}$ is analogous with the roles of $S_1$ and $S_2$ swapped.

We pick $h^{(1)},\ldots,h^{(k)}$ iteratively (in that order) as follows. Let ${\cal F}_i$ denote the subforest of ${\cal F}$ induced by vertices ${\cal H} \setminus \{h^{(1)},\dots,h^{(i-1)}\}$. We pick $h^{(i)}$ to be an arbitrary leaf of ${\cal F}_i$ subject to the constraint that $h^{(i)} \in S_2$ if $1 \le i \le c$, $h^{(i)} \not \in S_1 \cup S_2$ if $c+1 \le i \le k-c$, and $h^{(i)} \in S_1$ if $k-c+1 \le i \le k$.

For any $1 \le i \le k$ we assert that:
\begin{enumerate}
\item there is a leaf satisfying the desired constraint available to be $h^{(i)}$ and
\item if $i \le k-c$ there are $c$ connected components of ${\cal F}_{i+1}$ and each contains a vertex (hyperedge of $G'$) in $S_1$.
\end{enumerate}

The second property follows because we always choose a leaf and never choose a vertex from $S_1$.

For $1 \le i \le c$ the first property follows because removing a vertex from a graph cannot make a leaf into a non-leaf and every vertex of $S_2$ is a leaf in ${\cal F}_1$. For $c+1 \le i \le k-c$ the first property follows because the second property implies that there is a connected component of ${\cal F}_i$ with at least two vertices and hence leaves and at most one can be from $S_1$ and none from $S_2$.
\end{proof}
The next lemma was proven in \cite{SS} in the setting of general polynomials. We state below a special case corresponding to multilinear polynomials.
\begin{lemma}[Main Counting Lemma \cite{SS}]\label{MainCount}
For any $k$, $q\ge 1$, $\ell$, $c$ and $\bar{d} \ge \bar{2}$  we have
\begin{align*}
|{\cal S}(\ell, c, \bar{d})| \left(\prod_{v\in [\ell]} d_v!\right) &\le   R_0^{qk}   k^{qk - (q-1)c},
\end{align*}
for some universal constant $R_0>1$.
\end{lemma}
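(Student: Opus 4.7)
Since Lemma \ref{MainCount} is lifted from our earlier work \cite{SS}, the plan is to reconstruct that argument specialized to multilinear hyperedges (each of cardinality exactly $q$ rather than up to $q$). The overall strategy is to (i) decompose any hypergraph $G' \in {\cal S}(\ell,c,\bar d)$ into its $c$ connected components, (ii) bound the number of labeled connected components of prescribed size, and (iii) reassemble by distributing vertex and hyperedge labels among components.

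For step (ii) I would use the canonical ordering of hyperedges supplied by Lemma \ref{lem:ordering}: in a connected component with $k_j$ hyperedges on $\ell_j$ vertices, there exists an ordering $h^{(1)}, \dots, h^{(k_j)}$ such that every $h^{(i)}$ with $i \ge 2$ shares a vertex with $\bigcup_{i' < i} h^{(i')}$. Building the component along this ordering, the first hyperedge admits at most $\binom{\ell_j}{q} \le \ell_j^q$ choices; each subsequent hyperedge is pinned down by a choice of ``anchor'' vertex (among the at most $qk_j$ vertices already touched) plus the remaining $q-1$ free vertices (at most $\ell_j^{q-1}$ options). Dividing by $k_j!$ to compensate for the arbitrariness of the ordering yields a bound of the form $R^{qk_j}\, \ell_j^{(q-1)k_j + 1}/k_j!$ on the number of connected hypergraphs of that type.

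The factor $\prod_{v \in V_j} d_v!$ is then folded in via the multinomial estimate $\prod_{v \in V_j} d_v! \le (qk_j)^{qk_j}/\ell_j^{qk_j}$ (up to universal constants) combined with $\ell_j \le qk_j/2$, which follows from the minimum-degree-two condition and $\sum_v d_v = qk_j$. This trades excess powers of $\ell_j$ for powers of $k_j$, yielding a per-component bound of shape $R^{qk_j} k_j^{qk_j - (q-1)}$ for an absolute constant $R$. To combine over components, convexity of $x \mapsto x \log x$ implies $\prod_j k_j^{qk_j - (q-1)} \le k^{qk - (q-1)c}$ (since $\sum_j k_j = k$), and the $c^\ell \cdot c^k$ factor counting the distribution of vertex and hyperedge labels among the $c$ components is absorbed into the prefactor at the cost of enlarging $R_0$. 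The main technical obstacle is the delicate balancing of $\prod_v d_v!$ against the hyperedge-ordering overcount: performed naively one is left with a loose $\ell_j^{\Theta(qk_j)}$ term that must be converted to $k_j^{\Theta(qk_j)}$ using both the minimum-degree-two constraint and the multinomial identity, and this bookkeeping is where the proof in \cite{SS} expends most of its effort.
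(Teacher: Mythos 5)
The paper does not prove Lemma~\ref{MainCount}: it is quoted verbatim (specialized to the multilinear case) from the earlier work \cite{SS}, and the only role of the surrounding text is to record that specialization. So there is no in-paper proof to compare against; what follows is an assessment of your reconstruction on its own merits.

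The component-decomposition and spanning-tree-of-the-line-graph ideas are sensible and do appear in arguments of this flavor. The final recombination step also works, although it does not require convexity: since $1\le k_j\le k$ and $qk_j-(q-1)\ge 1>0$, one simply has $\sum_j(qk_j-(q-1))\log k_j\le\left(\sum_j(qk_j-(q-1))\right)\log k=(qk-(q-1)c)\log k$.

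The genuine gap is in the step where you fold in $\prod_v d_v!$ via the claimed inequality $\prod_{v\in V_j}d_v!\le (qk_j)^{qk_j}/\ell_j^{qk_j}$ (``up to universal constants''). This inequality is false in a way that cannot be repaired by enlarging $R_0$. Since $\log d!$ is convex, $\prod_v d_v!$ subject to $\sum_v d_v=qk_j$ and $d_v\ge 2$ is maximized at the extreme degree sequence $d_1=qk_j-2(\ell_j-1)$, $d_2=\cdots=d_{\ell_j}=2$. Take $q=2$, $c=1$, $\ell_j=k_j/2+1$, one vertex of degree $k_j$ and the rest of degree $2$ (this is realizable: $k_j/2$ distinct edges incident to the hub, each repeated twice). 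Then $\prod_v d_v!=k_j!\cdot 2^{k_j/2}\approx(k_j/e)^{k_j}$, whereas $(qk_j)^{qk_j}/\ell_j^{qk_j}\approx 4^{2k_j}=16^{k_j}$; the left side exceeds the right by a factor that grows like $(k_j/C)^{k_j}$ and cannot be absorbed into $R_0^{qk}$. The lemma itself is still true in this example because $|{\cal S}(\ell,c,\bar d)|$ is correspondingly tiny ($k_j!/2^{\ell_j-1}$), so that $|{\cal S}|\prod_v d_v!=(k_j!)^2\le R_0^{2k_j}k_j^{2k_j-1}$ — but that cancellation is invisible once you bound $|{\cal S}|$ and $\prod_v d_v!$ separately, as your proposal does. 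A closely related symptom: when $\ell_j$ is small (close to $q$), your anchor-vertex count of connected hypergraphs is a gross overcount (for $\ell_j=q$ every hyperedge must be the single set $[q]$, so the true count is $1$, not $R^{qk_j}\ell_j^{(q-1)k_j+1}/k_j!$), and it is exactly this overcount that is supposed to compensate for the $\prod d_v!$ term. The two bounds have to interact; in the extreme regimes each is individually loose by factors of the same magnitude, in opposite directions.

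The repair, which is what the argument in \cite{SS} actually implements, is to bound the product $|{\cal S}(\ell,c,\bar d)|\cdot\prod_v d_v!$ directly rather than each factor in isolation. Concretely, one counts \emph{decorated} hypergraphs — a hypergraph together with, at each vertex $v$, an ordering of the $d_v$ incident hyperedge labels — whose number is precisely $|{\cal S}(\ell,c,\bar d)|\prod_v d_v!$. One then encodes a decorated hypergraph by performing a tree-guided traversal of each component (using essentially the ordering of Lemma~\ref{lem:ordering}): for each of the $qk$ ``slots'' one records whether the occupying vertex is new or revisited, a bounded amount of tree bookkeeping, and, for revisits, a pointer of size $O(\log k)$. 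Counting the codewords gives $R_0^{qk}k^{qk-(q-1)c}$, with the exponent $qk-(q-1)c$ arising because each of the $c$ root hyperedges contributes $q$ pointer-free slots while every other hyperedge contributes at least one pointer-free slot via the shared anchor. As stated, your proposal does not carry out this joint encoding and therefore leaves a real gap.
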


\begin{lemma}[H\"older's Inequality]\label{holder}
Let $p_1,\dots,p_k\in (1,+\infty)$ such that $\sum_{i=1}^k\frac{1}{p_i}=1$ then for arbitrary collection $X_1,\dots,X_k$ of random variables on the same probability space the following inequality holds
$$\E\left[\left|\prod_{i=1}^kX_i\right|\right]\le \prod_{i=1}^k\E\left[\left|X_i\right|^{p_i}\right]^{1/p_i}.$$
\end{lemma}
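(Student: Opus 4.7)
The plan is to prove the $k$-variable Hölder inequality by induction on $k$, reducing to the classical two-variable case, which itself follows from Young's inequality. Since this is a completely standard result, the main task is just to assemble the pieces in the right order; there is no genuine obstacle.

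First I would establish the base case $k=2$. Young's inequality $ab \le a^p/p + b^q/q$ for $a,b \ge 0$ and conjugate exponents $p,q \in (1,\infty)$ with $1/p + 1/q = 1$ follows from the concavity of $\log$ (or equivalently from the convexity of $\exp$ applied to $\log a^p$ and $\log b^q$ with weights $1/p, 1/q$). Assuming the right-hand side of Hölder's inequality is nonzero and finite (the degenerate cases are immediate), I would normalize by setting $a = |X_1|/\E[|X_1|^{p_1}]^{1/p_1}$ and $b = |X_2|/\E[|X_2|^{p_2}]^{1/p_2}$, apply Young's inequality pointwise, and take expectations to obtain
\[
\frac{\E[|X_1 X_2|]}{\E[|X_1|^{p_1}]^{1/p_1}\,\E[|X_2|^{p_2}]^{1/p_2}} \le \frac{1}{p_1} + \frac{1}{p_2} = 1,
\]
which is exactly the $k=2$ statement.

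For the inductive step, suppose the inequality holds for $k-1$ factors and we are given $p_1,\dots,p_k \in (1,\infty)$ with $\sum_{i=1}^k 1/p_i = 1$. Define $q$ by $1/q = \sum_{i=2}^k 1/p_i = 1 - 1/p_1$, so that $p_1$ and $q$ are conjugate. Apply the base case to $X_1$ and $Z := \prod_{i=2}^k X_i$ with exponents $p_1$ and $q$:
\[
\E\bigl[|X_1 Z|\bigr] \le \E[|X_1|^{p_1}]^{1/p_1}\,\E[|Z|^q]^{1/q}.
\]
Then let $Y_i = |X_i|^q$ for $i=2,\dots,k$, and observe that the exponents $p_i/q$ satisfy $\sum_{i=2}^k q/p_i = 1$, so the inductive hypothesis applied to $Y_2,\dots,Y_k$ with exponents $p_i/q$ gives
\[
\E[|Z|^q] = \E\!\left[\prod_{i=2}^k Y_i\right] \le \prod_{i=2}^k \E\bigl[Y_i^{p_i/q}\bigr]^{q/p_i} = \prod_{i=2}^k \E\bigl[|X_i|^{p_i}\bigr]^{q/p_i}.
\]
Raising to the $1/q$ power and combining with the previous display yields
\[
\E\!\left[\left|\prod_{i=1}^k X_i\right|\right] \le \prod_{i=1}^k \E\bigl[|X_i|^{p_i}\bigr]^{1/p_i},
\]
completing the induction. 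The only subtle point is verifying that $q>1$ (which holds because $1/q = 1 - 1/p_1 \in (0,1)$) and handling the edge cases where some $\E[|X_i|^{p_i}]$ is zero or infinite, both of which are immediate.
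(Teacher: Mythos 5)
Your proof is correct: the base case via Young's inequality and the induction on $k$ using the conjugate exponent $1/q=\sum_{i\ge 2}1/p_i$ is the standard argument, and your checks that $q>1$ and that the reduced exponents $p_i/q$ exceed $1$ (for $k\ge 3$) are the only points that need care. Note that the paper itself states this lemma as a classical fact and offers no proof, so there is nothing to compare against; your write-up would simply supply the standard textbook derivation.
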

We will use the following corollary of H\"older's inequality.
\begin{corollary}[Minkowski's Inequality]\label{cor:momentOfSum}
Let $k$ be a positive integer and $Z_1,Z_2,\ldots,Z_m$ be (potentially dependent) random variables with $\E[|Z_i|^k] \le z_i^k$ for $z_i\in R_+$. It follows that
\begin{align}
\E\left[ \left(\sum_{i=1}^m|Z_i|\right)^k\right] & \le \left(\sum_{i=1}^mz_i\right)^k \label{eqn:momentOfSum}
.\end{align}
\end{corollary}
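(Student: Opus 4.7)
The plan is to establish this as the standard Minkowski inequality for the $L^k$ norm on the underlying probability space, obtained from Hölder's inequality (Lemma \ref{holder}) applied with a clever splitting. Let $S = \sum_{i=1}^m |Z_i|$; the goal is to show $\E[S^k]^{1/k} \le \sum_{i=1}^m z_i$, which upon raising to the $k$-th power gives (\ref{eqn:momentOfSum}).

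First I would dispense with trivial cases: if $k=1$ the inequality follows directly from linearity of expectation and the hypothesis $\E[|Z_i|] \le \E[|Z_i|^k]^{1/k} \le z_i$ (using Jensen's inequality for $k \ge 1$). Also, if $\E[S^k] = \infty$ one can truncate $|Z_i|$ to $\min(|Z_i|, N)$, apply the bound for each $N$, and pass to the limit via monotone convergence; if $\E[S^k] = 0$ the inequality is trivial. So I will assume $0 < \E[S^k] < \infty$ and $k \ge 2$.

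For the main step, write
\[
\E[S^k] \;=\; \E\!\left[\sum_{i=1}^m |Z_i|\cdot S^{k-1}\right] \;=\; \sum_{i=1}^m \E\!\left[|Z_i|\cdot S^{k-1}\right],
\]
and apply Hölder's inequality (Lemma \ref{holder}) with the conjugate exponents $p_1 = k$ and $p_2 = k/(k-1)$ to each summand:
\[
\E\!\left[|Z_i|\cdot S^{k-1}\right] \;\le\; \E[|Z_i|^k]^{1/k}\cdot \E[S^{k}]^{(k-1)/k} \;\le\; z_i\cdot \E[S^k]^{(k-1)/k}.
\]
Summing over $i$ gives $\E[S^k] \le \left(\sum_{i=1}^m z_i\right) \E[S^k]^{(k-1)/k}$. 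Dividing both sides by $\E[S^k]^{(k-1)/k}$ (valid since this quantity is positive and finite by assumption) yields $\E[S^k]^{1/k} \le \sum_{i=1}^m z_i$, and raising to the $k$-th power completes the proof.

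There is really no main obstacle here beyond the standard care with degenerate and infinite cases; the proof is essentially a one-line application of Hölder in the form that gives Minkowski's inequality. The only subtle point is to ensure that in the Hölder step the factor $\E[S^k]^{(k-1)/k}$ is finite so that the division is legitimate, which is why I would first handle the infinite-moment case by truncation before carrying out the main calculation.
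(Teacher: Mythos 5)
Your proof is correct. Note that the paper itself gives no proof of this corollary at all---it simply states it immediately after H\"older's inequality with the remark that it is a corollary thereof---so you are filling a gap rather than replicating an argument. Your route is the classical two-exponent derivation of Minkowski: write $\E[S^k]=\sum_i\E[|Z_i|S^{k-1}]$, apply H\"older with exponents $k$ and $k/(k-1)$, and divide by $\E[S^k]^{(k-1)/k}$; your care with the degenerate cases ($k=1$, $\E[S^k]\in\{0,\infty\}$) is exactly what makes the division step legitimate. A slightly more direct derivation, and arguably the one the paper's $k$-factor form of Lemma \ref{holder} is set up for, is to expand
$\E\bigl[\bigl(\sum_i|Z_i|\bigr)^k\bigr]=\sum_{i_1,\dots,i_k}\E\bigl[\prod_{j=1}^k|Z_{i_j}|\bigr]$
and apply H\"older with $p_1=\dots=p_k=k$ to each summand, giving $\prod_j\E[|Z_{i_j}|^k]^{1/k}\le\prod_j z_{i_j}$, whence the sum telescopes to $\bigl(\sum_i z_i\bigr)^k$. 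That version needs no division and hence no finiteness or positivity caveats, but both arguments are standard and equally valid here.
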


\section{General Even Moment Lemma}\label{ProofMainGen}

\begin{lemma}[General Even Moment Lemma]\label{generalmoment}
We are given $n$ independent central moment bounded random variables $Y=(Y_1,\dots, Y_n)$ with the same  parameter $L>0$ and a general power $q$ polynomial $f(x)$. Let $k\ge 2$ be an even integer then
 \begin{eqnarray}\label{inequality4gen}
\E\left[\left| f(Y)-\E\left[ f(Y)\right]\right|^k\right] & \le & \max \left\{ \left(\sqrt{k R_4^q Var[f(Y)]}\right)^{k}, \max_{t\in [q]}\left(k^{t}R_4^q  L^t\mu_t(f,Y)\right)^{k}\right\}.
\end{eqnarray}
where $R_4\ge 1$ is some absolute constant.
\end{lemma}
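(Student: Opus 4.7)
The plan is to reduce Lemma~\ref{generalmoment} to the Intermediate Moment Lemma (Lemma~\ref{lem:moment}) by a homogeneous decomposition of the centered polynomial that is only possible in the multilinear setting. Let $\mu_v = \E[Y_v]$ and $X_v = Y_v - \mu_v$. Since $f$ is multilinear, substituting $Y_v = X_v + \mu_v$ into each hyperedge product and expanding yields
\[
f(Y) = \sum_{S} w'_S \prod_{v \in S} X_v, \qquad w'_S := \sum_{h \in {\cal H},\, h \supseteq S} w_h \prod_{v \in h \setminus S} \mu_v,
\]
with $w'_\emptyset = \E[f(Y)]$. Thus $\tilde{f}(X) := f(Y) - \E[f(Y)]$ splits as $\tilde{f} = \sum_{r=1}^{q} \tilde{f}_r$, where $\tilde{f}_r(X) = \sum_{|S|=r} w'_S \prod_{v\in S} X_v$ is homogeneous of degree exactly $r$. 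Applying Minkowski's inequality (Corollary~\ref{cor:momentOfSum}) to the pointwise bound $|\tilde{f}(X)| \le \sum_r |\tilde{f}_r(X)|$ gives
\[
\E\bigl[|\tilde{f}(X)|^k\bigr]^{1/k} \le \sum_{r=1}^{q} \E\bigl[|\tilde{f}_r(X)|^k\bigr]^{1/k} \le q \cdot \max_{r\in[q]} \E\bigl[|\tilde{f}_r(X)|^k\bigr]^{1/k}.
\]

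Next, apply Lemma~\ref{lem:moment} to each $\tilde{f}_r$. Although that lemma is stated with nonnegative coefficients, the proof of Lemma~\ref{lem:momentPrelim} bounds $|\E[g(X)^k]|$ by taking absolute values of the weights in the step leading to~(\ref{inequality1}), so the same conclusion holds for signed multilinear polynomials provided $\mu_t$ is defined using $|w_h|$, which matches the paper's convention. Since $k$ is even, $\E[\tilde{f}_r(X)^k] = \E[|\tilde{f}_r(X)|^k]$, and we obtain a bound on $\E[|\tilde{f}_r(X)|^k]$ in terms of $Var[\tilde{f}_r(X)]$ and $\mu_t(\tilde{f}_r,Y)$ for $t\in[r]$.

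It remains to relate these quantities to $Var[f(Y)]$ and $\mu_t(f,Y)$. For the variance, $\tilde{f}_r$ and $\tilde{f}_s$ with $r\neq s$ are supported on hyperedges of different cardinalities, so Lemma~\ref{orthogonal} yields $\E[\tilde{f}_r(X)\tilde{f}_s(X)] = 0$; together with $\E[\tilde{f}_r(X)] = 0$ this gives
\[
Var[f(Y)] = Var[\tilde{f}(X)] = \sum_{r=1}^{q} Var[\tilde{f}_r(X)] \ge Var[\tilde{f}_r(X)].
\]
For the smoothness parameter, the triangle inequality combined with $|\E[Y_v]| \le \E[|Y_v|]$ gives $|w'_S| \le \sum_{h \supseteq S} |w_h| \prod_{v \in h \setminus S} \E[|Y_v|]$. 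Substituting into the definition of $\mu_t(\tilde{f}_r, Y)$ and interchanging the sums over $S$ and $h$ produces a combinatorial factor $\binom{|h|-t}{r-t} \le 2^q$, so $\mu_t(\tilde{f}_r, Y) \le 2^q \mu_t(f, Y)$ for $t \le r$.

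Assembling these bounds, raising to the $k$-th power, and choosing $R_4$ large enough that $q^2 R_3^q \le R_4^q$ and $q\cdot 2^q R_3^q \le R_4^q$ for all $q \ge 1$ (e.g., $R_4 \ge 3R_3$) absorbs the Minkowski factor $q^k$ and the combinatorial factor $(2^q)^k$, yielding~(\ref{inequality4gen}). The main conceptual step, and the reason the proof is restricted to the multilinear case, is the use of Lemma~\ref{orthogonal} to decouple the variance across homogeneous parts; this is precisely the orthogonality property flagged in the Techniques subsection as failing for general polynomials.
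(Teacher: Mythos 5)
Your proof is correct and follows essentially the same route as the paper's: expand $f$ in the centered variables $X_v$, split the resulting signed multilinear polynomial into pieces, apply the Intermediate Moment Lemma to each piece, use Lemma \ref{orthogonal} to bound each piece's variance by $Var[f(Y)]$, bound the smoothness parameters of each piece by $2^q\mu_t(f,Y)$, and recombine via Minkowski. The only difference is in handling signs: the paper groups the monomials by cardinality \emph{and} sign of coefficient so that each of the $m\le 2q$ pieces has nonnegative coefficients and Lemma \ref{lem:moment} applies verbatim, whereas you group by cardinality only and correctly observe that the proof of Lemma \ref{lem:momentPrelim} goes through with $|w_h|$ in place of $w_h$ (the identity $|w_h|^2=w_h^2$ keeps the variance step intact), so either device works and yields the same constant-adjustment at the end.
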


\begin{proof}
Let weight function $w$ and hypergraph $H=([n],{\cal H})$ be such that $f(Y) = \sum_{h \in {\cal H}} w_h \prod_{v \in \ve{h}} Y_v$. Let $X_{v} = Y_v-\Eb{Y_v}$.
Let ${\cal H}'$ denote the set of all possible hyperedges (including the empty hyperedge) with at most $q$ vertices (from ${\cal V}(H)=[n]$).
First we note that
\begin{align}
f(Y) &= \sum_{h \in {\cal H}} w_h \prod_{v \in \ve{h}} (X_{v} + \Eb{Y_v}) \nonumber \\
& = \sum_{h' \in {\cal H}'} \sum_{h \in {\cal H} : \ve{h} \supseteq \ve{h'}} w_h \left(\prod_{v \in \ve{h} \setminus \ve{h'}} \Eb{Y_v}\right)\left( \prod_{v \in \ve{h'}} X_{v}\right) \nonumber \\
& = \sum_{h' \in {\cal H}'} w'_{h'} \prod_{v \in \ve{h'}} X_{v} \label{eqn:center}
\end{align}
where
\[
w'_{h'} = \sum_{h \in {\cal H} : \ve{h} \supseteq \ve{h'}}w_h\left(\prod_{v \in \ve{h} \setminus \ve{h'}} \Eb{Y_v}\right)
.\]
We next group the monomials on the right hand side of (\ref{eqn:center}) by cardinality  and sign of coefficient, yielding $m \le 2q$ polynomials $g^{(1)},\dots,g^{(m)}$ with corresponding weight functions for all monomials $w^{(1)},\dots,w^{(m)}$ and powers $q_1,\dots,q_m$. That is,
\begin{align}
f(Y) & = w'_{\{\}} + \sum_{i=1}^m \sum_{h':|h'| \ge 1} w^{(i)}_{h'} \prod_{v \in \ve{h'}} X_{v} \\
& = \Eb{f(Y)} + \sum_{i=1}^m g^{(i)}(X)
\nonumber
\end{align}
where $\{\}$ is the empty hyperedge.
We have
\begin{align}
\mu_r(w^{(i)}, Y) &\le \mu_r(w', Y) = \max_{S:|S|=r} \sum_{h':{\cal V}(h') \supseteq S} |w'_{h'}| \prod_{v \in \ve{h'} \setminus S} \Eb{|Y_v|} \nonumber\\
 & \le \max_{S:|S|=r} \sum_{h': {\cal V}(h') \supseteq S} \sum_{h \in {\cal H} : \ve{h} \supseteq \ve{h'}}|w_h|\left(\prod_{v \in \ve{h} \setminus \ve{h'}} \Eb{|Y_v|}\right) \prod_{v \in \ve{h'} \setminus \ve{h_0}} \Eb{|Y_v|} \nonumber\\
 & \le  2^{q}\max_{S:|S|=r} \sum_{h: {\cal V}(h) \supseteq S} |w_h|\left(\prod_{v \in \ve{h} \setminus \ve{h_0}} \Eb{|Y_v|}\right) = 2^{q}\mu_r(w,Y)=2^{q}\mu_r, \label{eqn:muiMu}
\end{align}
where the last inequality follows from the fact that for any $h\in {\cal H}$ the number of different $h'\in {\cal H}'$ such that $\ve{h} \supseteq {\cal V}(h') \supseteq S$ is at most $2^{q}$.
In addition by Lemma \ref{orthogonal} we derive
\begin{eqnarray}
Var[f(Y)]&=&\sum_{i=1}^m Var[g^{(i)}(X)]. \label{eqn:varSumVars}
\end{eqnarray}

For even $k\ge 2$ Lemma \ref{lem:moment} implies that
$$\E\left[\left|g^{(i)}(X)\right|^k\right]=\left|\E\left[g^{(i)}(X)^k\right]\right|\le \max \left\{ \left(\sqrt{kR_3^{q_i}  Var[g^{(i)}(X)]}\right)^{k},
\max_{t\in [q_i]}(k^{t}R_3^{q_i} L^t\mu_t(w^{(i)},Y))^{k}\right\}=z_i^k.$$

Applying Corollary \ref{cor:momentOfSum} together with (\ref{eqn:muiMu}) and (\ref{eqn:varSumVars}) yields
\begin{eqnarray*}
\E\left[\left|f(Y)-\E\left[ f(Y)\right]\right|^k\right]&\le& \E\left[\left(\sum_{i=1}^{m}\left|g^{(i)}(X)\right|\right)^k\right]\le  \left(\sum_{i=1}^{m}z_i\right)^k\le m^k \max_i z_i^k \\
&\le&   \max \left\{ \left(\sqrt{kR_4^{q} Var[f(Y)]}\right)^{k}, \max_{t\in [q]}(k^{t} R_4^q L^t \mu_t)^{k}\right\}
\end{eqnarray*}
where we choose $R_4\ge 1$ such that $m^{2k}R_3^{qk}2^{qk}\le R_4^{qk}$.
\end{proof}

\begin{proof}[Proof of Lemma \ref{lem:varBound}]
As in the proof of Lemma \ref{generalmoment} we write $f(Y) = \Eb{f(Y)} + \sum_i g^{(i)}(X)$ where $X_v = Y_v - \Eb{Y_v}$. Let ${\cal H}'$, $m$, $g^{(i)}$, $w^{(i)}$ and $q_i$ be defined as in that proof.
Using Lemma \ref{variance}, the inequality  (\ref{eqn:muiMu}) with $r=q_i$ and the central moment boundness we get
\begin{align}
Var[g^{(i)}(X)] & = \sum_{h\in {\cal  H}'} (w^{(i)}_{h})^2 \prod_{v \in \ve{h}} \Eb{X_v^2} \nonumber\\
& \le \sum_{h\in {\cal  H}'} \mu_{q_i}(w^{(i)}, Y)\cdot |w^{(i)}_{h}| \cdot \prod_{v \in \ve{h}} (2L\Eb{|X_v|}) \nonumber\\
& \le \sum_{h\in {\cal  H}'} \mu_{q_i}(w^{(i)}, Y)\cdot |w^{(i)}_{h}|\cdot \prod_{v \in \ve{h}} (4L\Eb{|Y_v|}) \nonumber \\
& = (4L)^{q_i} \mu_{q_i}(w^{(i)}, Y) \mu_0(w^{(i)}, Y)  \label{eqn:variance}
.\end{align}

Combining  Lemma \ref{orthogonal} and the inequality   (\ref{eqn:variance}) we get
\begin{align*}
Var[f(Y)] & = \sum_{i=1}^m Var[g^{(i)}(X)] \\
& \le 2q \max_{r\in [q]} (4L)^r \mu_r(w^{(i)}, Y) \mu_0(w^{(i)}, Y) \\
& \le 2q4^q\max_{r \in [q]} 4^rL^r \mu_r(w, Y) \mu_0(w, Y)
\end{align*}
where the last inequality uses (\ref{eqn:muiMu}).
\end{proof}

\section{Proof of the Theorem \ref{main}}\label{ProofMain}

Now we prove Theorem \ref{main} by applying Markov's inequality.
\begin{proof}
By Markov's inequality we derive
$$Pr[|f(Y)-\E\left[ f(Y)\right]|\ge \lambda]=Pr[|f(Y)-\E\left[ f(Y)\right]|^k\ge \lambda^k] \le \frac{\E[|f(Y)-\E\left[ f(Y)\right]|^k]}{\lambda^k}.$$
Choosing $k^*\ge 0$ to be the  even integer  such that $k^*\in (K-2,K]$ for
$$K=\min \left\{ \frac{\lambda^2}{e^2 R_4^q  Var[f(Y)] },\min_{t\in [q]}\left(\frac{\lambda}{ e R_4^q L^t \mu_t}\right)^{1/t}\right\}$$
i.e.\
\[
\frac{\sqrt{k^*R_4^q Var[f(Y)]}}{\lambda} \le 1/e \text{ and } \frac{(k^*)^{t} R_4^q L^t  \mu_t}{\lambda} \le 1/e
\]
for all $t\in [q]$. Using inequality (\ref{inequality4gen}) from Lemma \ref{generalmoment} we derive
\begin{eqnarray*}
Pr[|f(Y)-\E\left[ f(Y)\right]|\ge \lambda] &\le& \frac{\E\left[|f(Y)-\E\left[ f(Y)\right]|^{k^*}\right]}{\lambda^{k^*}} \\
& \le &
\max\left\{  e^{k^*\ln \frac{\sqrt{k^*R_4^q   Var[f(Y)]}}{\lambda}}, \max_{t\in [q]} e^{k^*\ln  \frac{(k^*)^{t} R_4^q L^t \mu_t}{\lambda}   }\right\}\\
&\le& e^{-k^*}\le  e^{-K+2}\nonumber\\
&\le& e^2 \cdot \max\left\{ e^{-\frac{\lambda^2}{R^q  Var[f(Y)]}},\max_{t\in [q]}e^{- \left(\frac{\lambda}{R^q L^t \mu_t}\right)^{1/t}}\right\}, \label{eqn:centeredMoment}
\end{eqnarray*}
for some universal constant $R>R_4$. This implies the statement of the Theorem.
\end{proof}

\section{Examples of Central Moment Bounded Random Variables}\label{sec:exampleMom}



\subsection{Bounded Random variables}

\begin{lemma}\label{lem:bmb}
Any random variable $Z$ with $|Z-\Eb{Z}| \le L$ is central moment bounded with parameter $L$.
\end{lemma}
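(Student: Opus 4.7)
The plan is essentially immediate from the hypothesis. Let $W = Z - \E[Z]$, so the assumption gives $|W| \le L$ pointwise (almost surely). Then for any integer $i \ge 1$ we have $|W|^i = |W| \cdot |W|^{i-1} \le L \cdot |W|^{i-1}$ pointwise, and taking expectations yields
\[
\E[|W|^i] \le L \cdot \E[|W|^{i-1}] \le i \cdot L \cdot \E[|W|^{i-1}],
\]
which is exactly the central moment boundedness inequality in Definition \ref{var} (in fact with a factor of $i$ to spare, since $i \ge 1$).

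The only thing worth noting is that the extra factor of $i$ in the definition is not used here; the bound holds in the sharper form $\E[|W|^i] \le L \cdot \E[|W|^{i-1}]$. There is no real obstacle — the pointwise boundedness of the centered variable does all the work, and this lemma is stated only to record that the simple case of bounded random variables fits into the central moment bounded framework used by Theorem \ref{main}.
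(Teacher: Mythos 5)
Your proof is correct and is essentially identical to the paper's: both use the pointwise bound $|Z-\Eb{Z}|^i \le L\,|Z-\Eb{Z}|^{i-1}$, take expectations, and absorb the spare factor of $i \ge 1$. Nothing is missing.
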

\begin{proof}
For any $i \ge 1$ we clearly have $|Z-\Eb{Z}|^i \le L |Z-\Eb{Z}|^{i-1}$ hence $\Eb{|Z-\Eb{Z}|^i} \le L \Eb{|Z-\Eb{Z}|^{i-1}} \le i L \Eb{|Z-\Eb{Z}|^{i-1}}$.
\end{proof}

\subsection{Continuous log-concave random variables}

We say that non-negative function $f(x)$ is \emph{log-concave} if $f(\lambda x + (1-\lambda) y) \ge f(x)^\lambda f(y)^{1-\lambda}$ for any $0 \le \lambda \le 1$ and $x,y \in \R$ (see \cite{BV} Section 3.5). Equivalently $f$ is log concave if $\ln f(x)$ is concave on the set $\{x : f(x)>0\}$ where $\ln f(x)$ is defined and this set is a convex set (i.e.\ an interval).
A continuous random variable (or a continuous distribution) with density $f$ is \emph{log-concave} if $f$ is a log-concave function. See \cite{An,BB,BV} for introductions to log-concavity.

Schudy and Sviridenko \cite{SS} proved:
\begin{lemma}\label{lem:lcmb2} \cite{SS}
Any log-concave random variable $X$ with density $f$ is moment bounded with parameter $L=\frac{1}{\ln 2}\Eb{|X|} \approx 1.44 \Eb{|X|}$.
\end{lemma}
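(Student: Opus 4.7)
The plan is to bound the ratio $\Eb{|X|^{i}}/\Eb{|X|^{i-1}}$ by a linear function of $i$ using the sub-exponential tail behaviour characteristic of log-concave distributions.

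First I would reduce to the non-negative case. Writing $X = X^{+} - X^{-}$ with $X^{+} = \max(X,0)$ and $X^{-} = \max(-X,0)$, the elementary inequality $|X|^{i} \le 2^{i-1}((X^{+})^{i} + (X^{-})^{i})$ reduces the claim to bounding moments of the two non-negative random variables $X^{+}$ and $X^{-}$, whose densities are obtained by restricting and reflecting $f$ to $[0,\infty)$ and hence are log-concave on the half-line (with possibly an atom at the origin that does not spoil the subsequent analysis).

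Next, for a log-concave random variable $Y \ge 0$ with $\mu = \Eb{Y}$, I would establish a tail bound via Borell's inequality for log-concave measures: there is a parameter $m$ comparable to the median of $Y$ with $\pr{Y \ge t m} \le 2^{1-t}$ for $t \ge 1$. Integration by parts then gives
\[
\Eb{Y^{i}} \;=\; \int_{0}^{\infty} i s^{i-1}\, \pr{Y \ge s}\, ds \;\le\; C \cdot i! \cdot (m/\ln 2)^{i}
\]
for an absolute constant $C$. The crucial quantitative step is to show that $m$ is controlled by $\mu$, with the worst case being the exponential distribution (which satisfies $m = \mu \cdot \ln 2$ exactly); this is what produces the constant $\frac{1}{\ln 2}$ in the final bound.

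Finally, I would combine the upper bound on $\Eb{Y^{i}}$ with a matching lower bound on $\Eb{Y^{i-1}}$, which follows from the log-convexity of the moment sequence $i \mapsto \ln \Eb{Y^{i}}$ (valid for any non-negative random variable, and sharpened under log-concavity of $Y$), to conclude that $\Eb{Y^{i}}/\Eb{Y^{i-1}} \le i \cdot \mu/\ln 2$. The main obstacle is obtaining the precise constant $\frac{1}{\ln 2}$: a generic Borell-type tail bound only delivers $L = C \Eb{|X|}$ for some absolute $C > 1$, and extracting the exact $\frac{1}{\ln 2}$ requires a careful variational or comparison argument identifying the exponential distribution as the extremal case among log-concave distributions with a prescribed mean.
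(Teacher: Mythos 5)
The paper gives no proof for this lemma (it cites \cite{SS}), but the discrete analogue in its own Lemma~\ref{lem:dlcmb2} shows the intended mechanism, and your plan diverges from it in a way that does not just lose constants but actually fails.

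The structural problem is that you propose to \emph{separately} upper-bound $\Eb{Y^i}$ and lower-bound $\Eb{Y^{i-1}}$ and then divide. This cannot give a ratio bound that grows linearly in $i$ with the right constant. Concretely, your step~3 gives $\Eb{Y^i}\le C\,i!\,(m/\ln 2)^i$, but for a \emph{bounded} log-concave $Y$ (e.g.\ uniform on $[0,1]$) the moments decay like $1/(i+1)$, so the corresponding lower bound $\Eb{Y^{i-1}}\gtrsim (i-1)!\,(m/\ln 2)^{i-1}$ is hopelessly false, and the weaker Jensen-type lower bound $\Eb{Y^{i-1}}\ge \mu^{i-1}$ (the only thing log-convexity of the moment sequence actually hands you, given $\Eb{Y^0}=1$) leaves a residual factor of order $i!$ in the ratio. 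Log-convexity of $i\mapsto\ln\Eb{Y^i}$ only tells you the ratio $\Eb{Y^i}/\Eb{Y^{i-1}}$ is non-decreasing in $i$; it produces no usable lower bound on a single moment. What is actually needed is a \emph{direct} comparison of consecutive moments. Writing $\bar G(y)=\pr{Y\ge y}$ and using integration by parts,
\begin{align*}
\Eb{Y^i}=\int_0^\infty i\,y^{i-1}\bar G(y)\,dy=\int_0^\infty i\,y^{i-1}\,\frac{\bar G(y)}{g(y)}\,g(y)\,dy,
\end{align*}
and log-concavity makes the reciprocal hazard rate $\bar G/g$ non-increasing, so Chebyshev's sum (integral) inequality against the non-decreasing $y^{i-1}$ gives $\Eb{Y^i}\le i\,\bigl(\int\bar G\bigr)\,\Eb{Y^{i-1}}$ in one step. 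This is precisely what the paper's proof of Lemma~\ref{lem:dlcmb2} does in the discrete setting (the step ``$\le \left(\sum_x \frac{r_x}{p_x}p_x\right)\left(\sum_x k(x-a)^{k-1}p_x\right)$''), and nothing resembling it appears in your plan.

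Two further concrete errors. First, your step~4 is false as stated: it is not true that the median $m$ of a non-negative log-concave variable is bounded by $\mu\ln 2$, nor even by $\mu$. For the log-concave density proportional to $e^x$ on $[0,1]$ one computes $\mu=1/(e-1)\approx 0.582$ and $m=\ln\bigl((e+1)/2\bigr)\approx 0.620$, so $m>\mu>\mu\ln 2$; the exponential is \emph{not} the extremal case for this comparison. Second, the sign decomposition should use the exact identity $|X|^i=(X^+)^i+(X^-)^i$ (valid since at most one of $X^+,X^-$ is nonzero pointwise); the inequality $|X|^i\le 2^{i-1}\bigl((X^+)^i+(X^-)^i\bigr)$ you invoke would leave an uncancelled factor $2^{i-1}$ in the moment ratio and by itself destroys the linear-in-$i$ conclusion.
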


If $X$ is log-concave with density $f$ then $X - \Eb{X}$ clearly has density ${\tilde f}(x)=f(x+\Eb{X})$, which is evidently log-concave. Therefore:
\begin{corollary}\label{lem:lcmb2c}
Any log-concave random variable $X$ with density $f$ is central moment bounded with parameter $L=\frac{1}{\ln 2}\Eb{|X-\Eb{X}|} \approx 1.44 \Eb{|X-\Eb{X}|}\le 2.88\Eb{|X|}$.
\end{corollary}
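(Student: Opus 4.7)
The corollary is essentially a direct application of Lemma \ref{lem:lcmb2} to a shifted random variable, so the plan is short. The key observation (already stated just before the corollary) is that if $X$ has log-concave density $f$, then $Z := X - \Eb{X}$ has density $\tilde f(x) = f(x + \Eb{X})$, which is log-concave because log-concavity is preserved under translation: $\ln \tilde f(x) = \ln f(x+\Eb{X})$ is a composition of an affine map with the concave function $\ln f$ on the relevant convex support, so it remains concave.

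Next I would invoke Lemma \ref{lem:lcmb2} on $Z$. That lemma asserts that any log-concave random variable is moment bounded with parameter $\frac{1}{\ln 2}\Eb{|Z|}$, i.e.\
\[
\Eb{|Z|^i} \le i \cdot \tfrac{1}{\ln 2}\Eb{|Z|} \cdot \Eb{|Z|^{i-1}} \quad \text{for all integers } i \ge 1.
\]
Substituting $Z = X - \Eb{X}$ into this inequality is precisely the definition of $X$ being central moment bounded with parameter $L = \frac{1}{\ln 2}\Eb{|X-\Eb{X}|}$, matching the first stated value of $L$ in the corollary.

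Finally, for the second (looser) bound $L \le 2.88\,\Eb{|X|}$, I would use the triangle inequality for expectations: $\Eb{|X-\Eb{X}|} \le \Eb{|X|} + |\Eb{X}| \le 2\Eb{|X|}$ (the second inequality by Jensen), so $\frac{1}{\ln 2}\Eb{|X-\Eb{X}|} \le \frac{2}{\ln 2}\Eb{|X|} \approx 2.885\,\Eb{|X|}$.

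There is no real obstacle here; the only subtlety worth checking is that the translation argument for log-concavity is valid on the support (which is an interval by definition of log-concavity), and that Lemma \ref{lem:lcmb2} is being applied with its $L$ parameter equal to $\frac{1}{\ln 2}\Eb{|Z|}$ rather than $\frac{1}{\ln 2}\Eb{|X|}$—that is, the moment-bounded parameter depends on the first absolute moment of the shifted variable, not the original one.
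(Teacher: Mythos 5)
Your proof is correct and follows the paper's own route exactly: shift to $X-\Eb{X}$, note that translation preserves log-concavity of the density, and apply Lemma \ref{lem:lcmb2} to the shifted variable. The additional justification of the final bound $\le 2.88\,\Eb{|X|}$ via the triangle inequality and Jensen is a fine (and appropriate) elaboration of a step the paper leaves implicit.
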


\subsection{Discrete log-concave random variables}

A distribution over the integers $\dots, p_{-2},p_{-1},p_{0},p_{1},p_{2},\dots$ is said to be \emph{log-concave} \cite{An,JK} if $p_{i+1}^2 \ge p_i p_{i+2}$ for all $i$.
An integer-valued random variable $X$ is log-concave if its distribution $p_x = \pr{X=x}$ is.

The discrete case is a bit trickier than the continuous case since $X - \Eb{X}$ might take non-integer values even if $X$ takes integer ones. We therefore can only get inspiration from the proof in \cite{SS} that discrete log-concave random variables are moment bounded rather than using it as we did in the continuous case.

\begin{lemma}\label{lem:dlcmb2}
Let $X$ be a log-concave integer-valued random variable with $\pr{X \ge \ell} = 1$ and $\pr{X = \ell}>0$ for some $\ell \in \Z$.  Let   $a \in \R$ be an arbitrary real such that $a\le \ell \le a+1$. Then
\[
\Eb{|X - a|^k} \le k L \Eb{|X-a|^{k-1}}
\]
where $L= 1 + \Eb{|X-a|}$.
\end{lemma}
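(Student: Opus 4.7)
The plan is to reduce to a canonical form, apply a summation-by-parts identity, and then bound the resulting tail sum using log-concavity of the survival function. First I would set $\alpha := \ell - a \in [0,1]$ and $J := X - \ell$, so that $J$ is a non-negative integer-valued log-concave random variable with $\pr{J=0}>0$; writing $q_j := \pr{J=j}$, $Q_j := \pr{J \ge j}$, and $\mu := \Eb{J}$, the hypothesis $X \ge \ell \ge a$ gives $|X-a| = X-a = J+\alpha \ge 0$, so the task reduces to showing
\[
\Eb{(J+\alpha)^k} \le k(1+\alpha+\mu)\,\Eb{(J+\alpha)^{k-1}}.
\]

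Next I would apply Abel summation. Using the telescoping $(\alpha+j)^k = \alpha^k + \sum_{i=1}^{j} [(\alpha+i)^k - (\alpha+i-1)^k]$ together with the bound $(\alpha+i)^k - (\alpha+i-1)^k \le k(\alpha+i)^{k-1}$ (valid for $i\ge 1$ since $\alpha+i-1 \ge 0$), and exchanging the order of summation, gives
\[
\Eb{(J+\alpha)^k} \le \alpha^k + k\sum_{i\ge 1}(\alpha+i)^{k-1} Q_i.
\]
Thus the task becomes bounding $\sum_{i\ge 1}(\alpha+i)^{k-1} Q_i$ by $(1+\alpha+\mu)\Eb{(J+\alpha)^{k-1}}$, up to the easily absorbed additive $\alpha^k/k$ term (which is dominated by $\alpha\,\Eb{(J+\alpha)^{k-1}} \ge \alpha \cdot \alpha^{k-1} q_0$ after choosing $L$ appropriately).

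To carry out the comparison I would invoke the classical fact that the survival function of a log-concave integer distribution is itself log-concave (proved by splitting at the mode of $q$ and using that $q_{j+1}/q_j$ is non-increasing). This yields the tail bound $Q_i \le (1-q_0)^i$, and summing the geometric series in $\mu = \sum_{i\ge 1} Q_i$ gives the structural inequality $q_0 \le 1/(1+\mu)$. Rewriting the comparison via the tilted sequence $\hat q_j \propto q_j(\alpha+j)^{k-1}$ (which is again log-concave since $(\alpha+j)^{k-1}$ is a log-concave function of $j\ge 0$), the inequality reduces to bounding the mean of this tilted log-concave distribution.

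The main obstacle is precisely this final mean bound. A naive pointwise estimate such as $Q_i \le (1+\mu)\,q_{i-1}$ would suffice but is false in general when the mode of $q$ lies strictly above $0$ (the distribution $q_0 = 0.1$, $q_1 = 0.9$, $q_j = 0$ for $j\ge 2$ is a simple counterexample). The correct argument must split according to whether $j$ lies before or after the mode and use the geometric bound $Q_i \le q_i/(1 - q_{i+1}/q_i)$ in the decreasing region, together with more direct boundary estimates in the increasing region. The $+1$ term appearing in $L = 1 + \Eb{|X-a|}$ (as opposed to just $\Eb{|X-a|}$) is exactly what absorbs the discreteness-induced gap between $Q_i$ and $q_{i-1}$ in both regimes, and reflects the fact that the ``worst'' case up to a constant is the geometric distribution, for which this $+1$ correction is tight.
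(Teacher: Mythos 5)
Your reduction and Abel-summation steps match the paper's proof exactly: after writing $r_x = \pr{X \ge x}$ and telescoping, both arguments arrive at $\Eb{|X-a|^k} \le \sum_{x \ge \ell} r_x\, k (x-a)^{k-1}$ (your $\alpha^k$ boundary term is absorbed the same way the paper absorbs $(\ell-a)^k$, via $\alpha \le 1 \le k$ and $r_\ell = 1$). The gap is in the final comparison. You correctly observe that a pointwise bound such as $Q_i \le (1+\mu)\,q_{i-1}$ is false in general, but the repair you sketch --- splitting at the mode, using $Q_i \le q_i/(1-q_{i+1}/q_i)$ in the decreasing region, and unspecified ``boundary estimates'' in the increasing region --- is never carried out, and it is not clear it would yield the clean factor $1+\Eb{|X-a|}$ rather than some lossier constant. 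The auxiliary facts you do establish ($Q_i \le (1-q_0)^i$ and $q_0 \le 1/(1+\mu)$) are not used in any completed chain of inequalities, and the ``tilted distribution'' reformulation merely restates the problem.

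The missing idea is a correlation inequality, not a case analysis. Write the sum as $\sum_{x=\ell}^{u} p_x \cdot \frac{r_x}{p_x} \cdot k(x-a)^{k-1}$, where $u$ is the top of the support. For a log-concave integer distribution the ratio $r_x/p_x$ is non-increasing in $x$ (Proposition 10 in \cite{An}), while $k(x-a)^{k-1}$ is non-decreasing; Chebyshev's summation inequality with weights $p_x$ summing to $1$ then bounds the sum by $\left(\sum_x r_x\right)\left(\sum_x p_x\, k(x-a)^{k-1}\right) = \left(1+\Eb{X-\ell}\right) k\, \Eb{|X-a|^{k-1}} \le \left(1+\Eb{|X-a|}\right) k\, \Eb{|X-a|^{k-1}}$. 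This handles the entire support in one stroke, and it also shows that the $+1$ in $L$ comes from the extra $r_\ell = 1$ term in $\sum_{x\ge\ell} r_x = 1+\Eb{X-\ell}$, not from a discreteness gap between $Q_i$ and $q_{i-1}$ as you conjecture.
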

\begin{proof}
Let $u$ be the largest index $i$ such that $p_i > 0$ or infinity if there is no such index.  By log-concavity we have $p_i>0$ for all $i \in \Z$ with $\ell \le i \le u$. Let $r_i = \pr{X \ge i}$. Note that $r_\ell=1$ and when $u$ is finite $r_{i+1}=0$ for all $i \ge u$. We bound
\begin{align*}
\Eb{|X - a|^k}
 & = \sum_{x=\ell}^\infty (x-a)^k (r_x - r_{x+1}) \\
& = (\ell - a)^k + \sum_{x=\ell+1}^\infty r_x [(x-a)^k - (x-1-a)^k] \\
& \le (\ell - a)^k + \sum_{x=\ell+1}^\infty r_x k (x-a)^{k-1} \\
& \le \sum_{x=\ell}^\infty r_x k (x-a)^{k-1} \\
& = \sum_{x=\ell}^u \frac{r_x}{p_x} k (x-a)^{k-1} p_x \\
& \le \left(\sum_{x=\ell}^u \frac{r_x}{p_x} p_x\right)\left(\sum_{x=\ell}^u k (x-a)^{k-1} p_x\right) \\
& = \Eb{1+ |X-\ell|} \Eb{k|X-a|^{k-1}} \\
& \le (1 + \Eb{|X-a|}) k \Eb{|X-a|^{k-1}}
\end{align*}
where the second inequality uses the fact that $\ell - a \le 1 \le k$ and the third inequality follows from Chebychev's summation inequality, which applies because $r_x/p_x$ is a non-increasing sequence (Proposition 10 in \cite{An}) and $k (x-a)^{k-1}$ is a non-decreasing sequence.
\end{proof}

\begin{lemma}\label{lem:lcdmb2}
Any log-concave integer valued random variable $X$ such that $Pr\left[X=\Eb{X}\right]<1$ is central moment bounded with parameter $$L=1 + \max(\Eb{|X - \Eb{X}| ~|~ X \ge \Eb{X}}, \Eb{|X-\Eb{X}| ~|~ X < \Eb{X}}).$$
\end{lemma}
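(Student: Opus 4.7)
The plan is to reduce the statement to Lemma \ref{lem:dlcmb2} by partitioning the probability space at $\mu = \E[X]$ and applying that lemma separately to each half, using a reflection argument on the left half.

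First I would establish some structural facts. The support of a log-concave integer-valued distribution is an interval of consecutive integers (if $p_a, p_c > 0$ with $a < c$ and $p_b = 0$ for some $a < b < c$, log-concavity $p_{b}^2 \ge p_{b-1} p_{b+1}$ forces the support to break in a way that contradicts having mass on both sides). Combining this with $\E[X]=\mu$ and $\Pr[X=\mu]<1$ gives $\Pr[X\ge\mu]>0$, $\Pr[X<\mu]>0$, and both $\lceil\mu\rceil$ and $\lfloor\mu\rfloor$ lie in the support of $X$ (so the conditional distributions below have positive mass at their respective endpoints). Also, conditioning a log-concave pmf on a half-line preserves log-concavity, since the restricted pmf equals a positive multiple of the original on its support and zero elsewhere.

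Next I would handle the right half. Let $X^+$ denote $X$ conditioned on $\{X\ge\mu\}$, and set $\ell^+ = \lceil\mu\rceil$ and $a=\mu$. Then $X^+$ is log-concave, integer-valued, satisfies $\Pr[X^+\ge \ell^+]=1$ and $\Pr[X^+=\ell^+]>0$, and $a\le\ell^+\le a+1$. Applying Lemma \ref{lem:dlcmb2} gives
\[
\E\!\left[|X-\mu|^i \mid X\ge \mu\right] \;\le\; i\,L^+\,\E\!\left[|X-\mu|^{i-1}\mid X\ge\mu\right], \qquad L^+ = 1+\E\!\left[|X-\mu|\mid X\ge\mu\right].
\]
For the left half I would work with $W=-X$, which is also log-concave and integer-valued with $\E[W]=-\mu$. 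Conditioning $W$ on $\{W>-\mu\}=\{X<\mu\}$ and taking $\ell^- = -\lfloor\mu\rfloor$, $a=-\mu$ satisfies $a\le \ell^-\le a+1$, and Lemma \ref{lem:dlcmb2} applies. Since $|W+\mu|=|X-\mu|$, this yields the analogous bound
\[
\E\!\left[|X-\mu|^i \mid X<\mu\right] \;\le\; i\,L^-\,\E\!\left[|X-\mu|^{i-1}\mid X<\mu\right], \qquad L^- = 1+\E\!\left[|X-\mu|\mid X<\mu\right].
\]

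Finally I would combine the two halves via the law of total expectation over the partition $\{X\ge\mu\}\sqcup\{X<\mu\}$: weighting the two displayed inequalities by $\Pr[X\ge\mu]$ and $\Pr[X<\mu]$ respectively and replacing $L^+,L^-$ by $L=\max(L^+,L^-)$ gives $\E[|X-\mu|^i]\le iL\,\E[|X-\mu|^{i-1}]$, which is the definition of central moment bounded with the stated $L$. The one subtle point I expect to need care with is the case analysis on whether $\mu\in\Z$ or not, ensuring in both cases that the hypotheses of Lemma \ref{lem:dlcmb2} ($\Pr[Z\ge \ell]=1$, $\Pr[Z=\ell]>0$, and $a\le\ell\le a+1$) really hold for the conditioned variables; the interval-support observation is what makes these conditions go through uniformly.
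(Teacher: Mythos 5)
Your proof is correct and follows essentially the same route as the paper's: split at $\mu=\E[X]$, condition on each half (reflecting the left half to $-X$), note that log-concavity is inherited by the conditioned/reflected variables, apply Lemma \ref{lem:dlcmb2} to each piece, and recombine by total expectation with $L=\max(L^+,L^-)$. The one slip is the left-half endpoint when $\mu\in\Z$: your $\ell^-=-\lfloor\mu\rfloor=-\mu$ gives $\Pr[-X=\ell^-\mid X<\mu]=\Pr[X=\mu\mid X<\mu]=0$, violating the hypothesis $\Pr[Z=\ell]>0$ of Lemma \ref{lem:dlcmb2}; the paper uses $\ell^-=1-\lceil\mu\rceil$, which works whether or not $\mu$ is an integer --- precisely the case analysis you flagged as needing care.
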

\begin{proof} It follows that $\pr{X \ge \Eb{X}}$ and $\pr{X < \Eb{X}}$ are both strictly positive, hence by log-concavity $\pr{X=\ceil{\Eb{X}}}$ and $\pr{X=\ceil{\Eb{X}} - 1}$ are both strictly positive.

Write
\begin{align*}
\Eb{|X-\Eb{X}|^k} &= \pr{X \ge \Eb{X}}\Eb{(X-\Eb{X})^k | X \ge \Eb{X}} + \pr{X < \Eb{X}}\Eb{(\Eb{X} - X)^k | X < \Eb{X}} \\
&= \pr{X \ge \Eb{X}}\Eb{(X_+ - \Eb{X})^k} + \pr{X < \Eb{X}}\Eb{(X_{-} + \Eb{X})^k}
\end{align*}
where $X_+$ is a random variable with $\pr{X_+=x} = \pr{X=x | X \ge \Eb{X}}$ and $X_-$ is a random variable with $\pr{X_-=x} = \pr{X=-x | X < \Eb{X}}$ for integer $x$. Clearly $X_+$ and $X_-$ inherit log-concavity from $X$. We also have $X_+ \ge \Eb{X}$ and $X_- \ge -\Eb{X}$.

We apply Lemma \ref{lem:dlcmb2} twice, first to $X_+$ with $a=\Eb{X}$ and $\ell = \ceil{\Eb{X}}$ and second to $X_-$ with $a=-\Eb{X}$ and $\ell=1-\ceil{\Eb{X}}$, yielding
\begin{align*}
\lefteqn{\Eb{|X-\Eb{X}|^k}} \\
 &= \pr{X \ge \Eb{X}}\Eb{(X_+ - \Eb{X})^k} + \pr{X < \Eb{X}}\Eb{(X_{-} + \Eb{X})^k} \\
 & \le \pr{X \ge \Eb{X}} k (1+ \Eb{|X_+ - \Eb{X}|}) \Eb{|X_+ - \Eb{X}|^{k-1}} + \\
 & \phantom{=} + \pr{X < \Eb{X}} k (1+\Eb{|X_- + \Eb{X}|}) \Eb{|X_- + \Eb{X}|^{k-1}} \\
&\le k (1+\max(\Eb{|X_+ - \Eb{X}|}, \Eb{|X_- + \Eb{X}|})) \cdot \\
& \phantom{=}\cdot \left(\pr{X \ge \Eb{X}}\Eb{|X - \Eb{X}|^{k-1} | X \ge 0} + \pr{X < \Eb{X}}\Eb{|X - \Eb{X}|^{k-1} | X < \Eb{X}}\right) \\
& = k (1+\max(\Eb{|X-\Eb{X}|~|~X \ge \Eb{X}}, \Eb{|X-\Eb{X}|~|~X < \Eb{X}})) \Eb{|X-\Eb{X}|^{k-1}}
.\end{align*}
\end{proof}



\begin{thebibliography}{2}


\bibitem{An} M.\ An, Log-concave Probability Distributions: Theory and Statistical Testing, in Game Theory and Information EconWPA 9611002 (1996).

\bibitem{BB} M.\ Bagnoli and T.\ Bergstrom, Log-concave probability and its applications, Economic Theory 26(2) (2005), pp.\ 445-469.


\bibitem{B1} S.N. Bernstein, Theory of Probability, (Russian), Moscow, 1927.

\bibitem{B} S.N. Bernstein,  On certain modifications of Chebyshev's inequality. Doklady Akademii Nauk SSSR 17 (6) (1937), pp. 275–-277.




\bibitem{BV} S. Boyd and L. Vandenberghe, Convex optimization, Cambridge University Press, 2004.

\bibitem{BCL} C. Buchheim, A. Caprara and A. Lodi, An Effective Branch-and-Bound Algorithm for Convex Quadratic Integer Programming, in Proceedings of IPCO 2010, pp. 285--298.

\bibitem{DHK} M. Dudik, D.  Hsu, S. Kale, N. Karampatziakis, J. Langford, L. Reyzin and T. Zhang, Efficient Optimal Learning for Contextual Bandits, in Proceedings of UAI 2011.


\bibitem{FR} D. Freedman,   On tail probabilities for martingales. Ann. Probability 3 (1975), pp. 100–-118.

\bibitem{jacod} J.\ Jacod and P.\ Protter. Probability Essentials, Springer-Verlag (2004).

\bibitem{J} S. Janson,    Gaussian Hilbert spaces. Cambridge Tracts in Mathematics. 129. Cambridge: Cambridge University Press, (1997).

\bibitem{JK} N. Johnson and S. Kotz, Discrete Distributions, John Wiley, New York (1969).

\bibitem{KV} J. Kim and V. Vu,   Concentration of multivariate polynomials and its applications, Combinatorica 20 (2000), no. 3, 417-434.

\bibitem{L} R. Latala, Estimates of moments and tails of Gaussian chaoses, Ann. Probab. 34 (2006), pp. 2315-2331.

\bibitem{MSS} K. Makarychev, W. Schudy and M. Sviridenko, Concentration Inequalities for Nonlinear Matroid Intersection, to appear in SODA2012.

\bibitem{mcdiarmid98} C. McDiarmid, Concentration, In Probabilistic Methods for Algorithmic Discrete Mathematics, M. Habib, C. McDiarmid, J. Ramirez-Alfonsin and B. Reed editors, pp. 195--248, Springer, 1998.

\bibitem{MOO} E. Mossel, R. O'Donnell, and K. Oleszkiewicz. Noise stability of functions with low influences: invariance and optimality. Annals of Mathematics 171(1), pp.\ 295-341 (2010).


\bibitem{RT} R. Prabhakar Raghavan and C. Thompson, Randomized rounding: a technique for provably good algorithms and algorithmic proofs. Combinatorica 7(4): 365-374 (1987).


\bibitem{SS} W. Schudy and M. Sviridenko, Concentration and Moment Inequalities for Polynomials of Independent Random Variables, submitted for publication, extended abstract appeared in Proceedings of SODA2012.



\bibitem{U}   J. V. Uspensky, Introduction to Mathematical Probability, McGraw-Hill Book Company, 1937.

\bibitem{V1} V. Vu,  Concentration of non-Lipschitz functions and applications, Probabilistic methods in combinatorial optimization. Random Structures Algorithms 20 (2002), no. 3, 262-316.


\bibitem{V2} V. Vu,   On the concentration of multivariate polynomials with small expectation, Random Structures Algorithms 16 (2000), no. 4, 344-363.


\end{thebibliography}
\end{document}